\theoremstyle{thmstyleone}%
\newtheorem{theorem}{Theorem}
\newtheorem{corollary}{Corollary}
\newtheorem{proposition}{Proposition}%
\theoremstyle{thmstyletwo}%
\newtheorem{remark}{Remark}%
\theoremstyle{thmstylethree}%
\newtheorem{definition}{Definition}%
\begin{document}
	
	\title[Hyperstability of some functional equations]{Hyperstability of some functional equations in modular spaces}
	
	
	\author[1]{\fnm{Abderrahman} \sur{Baza}}\email{abderrahmane.baza@gmail.com}
	\equalcont{These authors contributed equally to this work.}
	
	\author*[2]{\fnm{Mohamed} \sur{Rossafi}}\email{rossafimohamed@gmail.com; mohamed.rossafi1@uit.ac.ma}
	\equalcont{These authors contributed equally to this work.}
	
	\author[3]{\fnm{Mohammed} \sur{Mouniane}}\email{mohammed.mouniane@uit.ac.ma}
	\equalcont{These authors contributed equally to this work.}

	\affil[1]{\orgdiv{Laboratory Analysis, Geometry and Applications, Department of Mathematics}, \orgname{Faculty of Sciences, University Ibn Tofail}, \orgaddress{\city{Kenitra}, \country{Morocco}}}
	
	\affil[2]{\orgdiv{Laboratory Partial Differential Equations, Spectral Algebra and Geometry}, \orgname{Higher School of Education and Training, Ibn Tofail University}, \orgaddress{\city{Kenitra}, \country{Morocco}}}
	
	\affil[3]{\orgdiv{Laboratory Analysis, Geometry and Applications, Department of Mathematics}, \orgname{Faculty of Sciences, University Ibn Tofail}, \orgaddress{\city{Kenitra}, \country{Morocco}}}

	\abstract{
		In this paper, we investigate some hyperstability results, inspired by the concept of Ulam stability, for the following functional equations:
		\begin{equation}
			\varphi(x+y)+\varphi(x-y)=2\varphi(x)+2\varphi(y)
		\end{equation}
		\begin{equation}
			\varphi(ax+by)=A\varphi(x)+B\varphi(y)+C
		\end{equation}
		\begin{equation}\label{eqnd}
			f\left(\sum_{i=1}^{m}x_{i}\right)+\sum_{1\leq i<j\leq m}f\big(x_{i}-x_{j}\big)=m\sum_{i=1}^{m}f(x_{i})
		\end{equation}
		in modular spaces.}

	\keywords{Hyers-Ulam stability;  quadratic functional equation; general linear functional equation; hyperstability.}

	\pacs[MSC Classification]{39B22; 39B52; 39B82.}
	
	\maketitle

\section{Introduction and preliminaries}

A fundamental question in the theory of functional equations is: Under what conditions is a function that satisfies approximately a functional equation necessarily close to an exact solution of that equation? If such conditions are met, the equation is said to be stable.
The initial stability problem related to group homomorphisms was proposed by Ulam in 1940 (see \cite{ulam}). In 1941, the investigation into the stability theory of functional equations began with Hyers' pioneering work. In his paper \cite{hyers}, Hyers provided a partial affirmative response to Ulam's question concerning the additive functional equation, specifically when the groups are Banach spaces. This result was subsequently generalized in 1950 by Aoki \cite{aoki}, Bourgin \cite{bourgin}, and Rassias \cite{ras}, who extended the findings to include additive and linear mappings by addressing the unbounded Cauchy difference. Further advancing this field, G\u{a}vru\c{t}\u{a} \cite{gav} in 1994 introduced a broader generalization of the results by Rassias et al., replacing the bounded condition with a general control positive function $\varphi(x, y)$ to establish the existence of a unique linear mapping.\\
During this period, a special form of stability known as hyperstability was identified. Hyperstability of a functional equation requires that any mapping approximately satisfying the equation (in some defined sense) must indeed be an exact solution. Although the term "hyperstability" was first introduced by Maksa in 2001 \cite{maksa}, its earliest occurrence dates back to 1949, as noted by Bourgin in \cite{bourgin1}. Among the most renowned methods for proving the hyperstability of functional equations is the fixed point approach, which has gained prominence over the past two decades thanks to the contributions of J. Brzd\c{e}k and K. Ciepli\'{n}ski \cite{H1} to the fixed point theory, followed by various other authors. For further reference, see \cite{H2, H3, H4, H5, H6, H7, H8}. 
Additionally, numerous papers have been published addressing the hyperstability of functional equations (see, e.g., \cite{H9, H10, H11, H12, H13}).
The functional equation
\begin{equation}\label{eqquad}
	f(x+y)+f(x-y)=2f(x)+2f(y)
\end{equation}
is called a quadratic functional equation, and 
any solution of this equation is referred to as a quadratic mapping. 
The problem of Hyers-Ulam stability for the quadratic functional equation was established by Skof \cite{skof} for mappings $f: X \to Y $, where $X$ is a normed space and $Y$ is a Banach space. Cholewa \cite{chol} observed that Skof's theorem remains valid if the domain $X$  is replaced by an Abelian group. 
Additionally, Czerwik \cite{cz} demonstrated the Hyers-Ulam-Rassias stability of the quadratic functional equation.\\
Research on modular and modular spaces as extensions of normed spaces was initiated by Nakano \cite{ref-26}. Since the 1950s, numerous eminent mathematicians \cite{ref-27,ref-30,ref-31} have made significant contributions to this field. Orlicz spaces and interpolation theory are two examples of applications of modular spaces in \cite{ref-32,ref-33,ref-30}.
We now present the definition, properties and usual terminologies of the theory of modular spaces.
\begin{definition}\label{Definition1.1}
	Let $Y$ be an arbitrary vector space. A functional $\rho: Y \rightarrow[0, \infty)$ is called a modular, if for arbitrary $u, v \in Y$, the following conditions hold:
	\begin{enumerate}
		\item $\rho(u)=0$ if and only if $u=0$.
		\item $\rho(\alpha u)=\rho(u)$ for every scalar $\alpha$ with $|\alpha|=1$.
		\item $\rho(\alpha u+\beta v) \leq \rho(u)+\rho(v)$ if and only if $\alpha+\beta=1$ and $\alpha ,\beta \geq 0$.\label{item1}\\
		If condition \eqref{item1} is replaced by:
		\item $\rho(\alpha u+\beta v) \leq \alpha \rho(u)+\beta \rho(v)$ if and only if $\alpha+\beta=1$ and $\alpha, \beta \geq 0$, then we say that $\rho$ is a convex modular.
	\end{enumerate}
	A modular $\rho$ defines a corresponding modular space, denoted by $Y_\rho$,  which is given by:
	\begin{equation*}
		Y_\rho=\{u \in Y: \rho(\lambda u) \rightarrow 0 \text { as } \lambda \rightarrow 0\}.
	\end{equation*}
	A function modular is said to satisfy the $\Delta_s$-condition if there exists $\tau_{s} > 0$ such that\\ $\rho(s u) \leq \tau_{s} \rho(u)$ for all $u \in Y_\rho$.
\end{definition}
\begin{definition}
	Let $\left\{u_n\right\}$ be a sequence in $Y_\rho$ and let $u \in Y_\rho$. Then
	\begin{enumerate}
		\item 
		The sequence $\left\{u_n\right\}$ is said to be $\rho$-convergent to $u$, denoted as $u_n \rightarrow u$, if $\rho\left(u_n-u\right) \rightarrow 0$ as $n \rightarrow \infty$.
		\item
		The sequence $\left\{u_n\right\}$ is called $\rho$-Cauchy if $\rho\left(u_n-u_m\right) \rightarrow 0$ as $n, m \rightarrow \infty$.
		\item
		$Y_\rho$ is said to be $\rho$-complete if every $\rho$-Cauchy sequence in $Y_\rho$ is $\rho$-convergent.
	\end{enumerate}
\end{definition}
\begin{proposition}
	In a modular space,
	\begin{itemize}
		\item If $u_n \overset{\rho}{\to} u$ and a is a constant vector, then $u_n+a \overset{\rho}{\to} u+a$.
		\item If $u_n \overset{\rho}{\to} u$ and $v_n \overset{\rho}{\to} v$ then $\alpha u_n + \beta v_n \overset{\rho}{\to} \alpha u+ \beta v$, where $\alpha+\beta \leq 1$ and $\alpha,\beta \geq 0$. 
	\end{itemize}
\end{proposition}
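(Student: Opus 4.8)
The plan is to verify the two $\rho$-convergence assertions directly from the axioms of a modular given in Definition~\ref{Definition1.1}, handling each item in turn.

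For the first item, I would simply note that the constant vector cancels in the difference: $(u_n+a)-(u+a)=u_n-u$, so that $\rho\bigl((u_n+a)-(u+a)\bigr)=\rho(u_n-u)\to 0$ by hypothesis. This part needs none of the subadditivity/convexity structure and is immediate.

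For the second item, I would reduce matters to showing $\rho(w_n)\to 0$, where $w_n:=\alpha(u_n-u)+\beta(v_n-v)$, since $\alpha u_n+\beta v_n-(\alpha u+\beta v)=w_n$. The natural tool is the third condition in the definition of a modular, which controls $\rho$ of a convex combination; the catch is that it is stated for coefficients summing to exactly $1$, whereas here only $\alpha+\beta\le 1$. I would therefore split into three cases. If $\alpha=\beta=0$, then $w_n=0$ and $\rho(w_n)=0$ for all $n$. If $\alpha+\beta=1$, condition~(3) applied to the vectors $u_n-u$ and $v_n-v$ gives $\rho(w_n)\le\rho(u_n-u)+\rho(v_n-v)\to 0$. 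Finally, if $0<s:=\alpha+\beta<1$, I would introduce an auxiliary zero vector and write
\[
w_n=s\Bigl(\tfrac{\alpha}{s}(u_n-u)+\tfrac{\beta}{s}(v_n-v)\Bigr)+(1-s)\cdot 0,
\]
apply condition~(3) with coefficients $s$ and $1-s$ (which do sum to $1$) together with $\rho(0)=0$ to get $\rho(w_n)\le\rho\bigl(\tfrac{\alpha}{s}(u_n-u)+\tfrac{\beta}{s}(v_n-v)\bigr)$, and then apply condition~(3) once more to the inner convex combination $\tfrac{\alpha}{s}(u_n-u)+\tfrac{\beta}{s}(v_n-v)$, obtaining $\rho(w_n)\le\rho(u_n-u)+\rho(v_n-v)\to 0$. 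In every case $\rho(w_n)\to 0$, which is the claim.

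The only real obstacle is this mismatch between ``$\alpha+\beta\le 1$'' in the statement and ``$\alpha+\beta=1$'' in the axiom, and it is resolved cleanly by the double application of condition~(3) with a padding zero vector. I would also stress that one must \emph{not} try to pull scalars out of $\rho$, since condition~(2) only gives $\rho(\lambda u)=\rho(u)$ for $|\lambda|=1$; the argument above is arranged precisely so that no scalar is ever factored through $\rho$ on its own.
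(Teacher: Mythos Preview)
Your proof is correct. The first item is indeed immediate from the cancellation $(u_n+a)-(u+a)=u_n-u$, and for the second item your three-case split together with the ``padding by zero'' trick (writing $w_n=s\cdot(\text{inner convex combination})+(1-s)\cdot 0$ and applying axiom~(3) twice) is a clean way to bridge the gap between the hypothesis $\alpha+\beta\le 1$ and the axiom's requirement $\alpha+\beta=1$; the use of $\rho(0)=0$ from axiom~(1) is legitimate.

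As for comparison: the paper states this proposition without proof, treating it as a standard preliminary fact about modular spaces, so there is no argument in the paper to compare yours against. Your write-up therefore supplies what the paper omits, and does so using only the non-convex modular axioms, which is slightly stronger than what one might extract from the convexity-based observations in the subsequent Remark.
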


\begin{remark}
	Note that $\rho(u)$ is an increasing function, for all $u \in X$. 
	Suppose $0<a<b$. Then,  by property (4) of Definition \ref{Definition1.1} with $v=0$, we have $\rho(a x)=\rho\left(\dfrac{a}{b} b u\right) \leq \rho(b u)$ for all $u \in Y$. 
	Moreover, if $\rho$ is a convex modular on $Y$ and $|\alpha| \leq 1$, then $\rho(\alpha u) \leq \alpha \rho(u)$.

	In general, if $\lambda_i \geq 0$ and $\sum_{i=1}^{n}\lambda_i=1$, 	
	then
	$$\rho (\lambda_1 u_1+\lambda_2 u_2+\dots+\lambda_n u_n) \leq \lambda_1 \rho(u_1)+\lambda_2 \rho(u_2)+\dots+\lambda_n \rho(u_n).$$
	If $\{u_n\}$ is $\rho$-convergent to $u$, then $\{ c u_n \}$ is $\rho$-convergent to $cu$, where $|c| \leq 1$.
	However, the $\rho$-convergence of a sequence $\{u_n\}$ to $u$ does not imply that $\{\alpha u_n\}$ is $\rho$-convergent to $\alpha u$ for scalars $\alpha $ with $|\alpha|>1$.
	
	If $\rho$ is a convex modular satisfying the $\Delta_s$-condition with $0 <\tau_{s}<s$, then
	$$\rho(u) \leq \tau_{s} \rho(\dfrac{1}{s} u) \leq \dfrac{\tau_{s}}{s} \rho(u)\text{ for all }u.$$
	Hence, $\rho=0$. Consequently, we must have $\tau_{s} \geq s$ if $\rho$ is convex modular. 
\end{remark}

\section{Hyperstability of Equation \eqref{eqquad}}
In this section, we investigate the hyperstability of Eq. \eqref{eqquad} in modular spaces. Specifically, we will prove that every approximate solution of Eq. \eqref{eqquad}, under some conditions, is an exact solution to it. Throughout this section, we assume that $X$ is a vector space over a field $\mathbb{K}$, and $Y_{\rho}$ is a convex modular space satisfying the $\Delta_{2}$-condition.
\begin{theorem}\label{th2}
	Suppose that $E$ is a non-empty subset of $X$ that is symmetric with respect to $0$ and satisfies the conditions $x + y, x - y \in E$ and $kx \in E$ for all $x, y \in E$ and all $k \in \mathbb{K}$. Let $\alpha: E^{2} \to [0, \infty)$ be a function such that
	\begin{equation}\label{eq1t2}
		\lim_{n\rightarrow\infty}\alpha(x,nx)=0,
	\end{equation}
	and
	\begin{equation*}
		\lim_{n\rightarrow\infty}\alpha(nx,ny)=0,
	\end{equation*}
	for all $x,y\in E$. Let $\varphi:E\to Y_{\rho}$ be a mapping satisfying
	\begin{equation*}\label{eq4t2}
		\rho(\varphi(x+y)+\varphi(x-y)-2\varphi(x)-2\varphi(y)\Big)\leq \alpha (x,y),
	\end{equation*}
	for all $x,y\in E$. Then $\varphi$ is quadratic on $E$.
\end{theorem}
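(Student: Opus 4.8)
The plan is to apply the fixed point method of Brzd\c{e}k and Ciepli\'{n}ski, adapted to the convex modular space $Y_{\rho}$ (which satisfies the $\Delta_{2}$-condition, say with constant $\tau_{2}\ge 2$). First I would extract from the defect inequality an approximate fixed point equation for $\varphi$. Fixing $x\in E$ and substituting $y=nx$ (legitimate since $nx\in E$) gives
$$\rho\big(\varphi((n+1)x)+\varphi((1-n)x)-2\varphi(x)-2\varphi(nx)\big)\le\alpha(x,nx),$$
and, since $(1-n)x=-(n-1)x\in E$, solving for $\varphi(x)$ suggests the operator $\mathcal{T}_{n}\colon Y_{\rho}^{E}\to Y_{\rho}^{E}$ defined by
$$(\mathcal{T}_{n}\xi)(x):=\tfrac{1}{2}\xi((n+1)x)+\tfrac{1}{2}\xi((1-n)x)-\xi(nx),$$
for which $\rho\big(\varphi(x)-(\mathcal{T}_{n}\varphi)(x)\big)\le\varepsilon_{n}(x):=\alpha(x,nx)$ for all $x\in E$.

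Next I would verify that $\mathcal{T}_{n}$ is contractive in the modular sense: rewriting $(\mathcal{T}_{n}\xi-\mathcal{T}_{n}\mu)(x)$ as a convex combination and using convexity of $\rho$ together with the $\Delta_{2}$-condition, one obtains, whenever $\rho((\xi-\mu)(z))\le\delta(z)$ for all $z\in E$,
$$\rho\big((\mathcal{T}_{n}\xi-\mathcal{T}_{n}\mu)(x)\big)\le(\Lambda_{n}\delta)(x):=\tfrac{\tau_{2}}{4}\delta((n+1)x)+\tfrac{\tau_{2}}{4}\delta((1-n)x)+\tfrac{\tau_{2}}{2}\delta(nx).$$
Then the modular version of the Brzd\c{e}k--Ciepli\'{n}ski fixed point theorem applies: provided $\varepsilon_{n}^{*}(x):=\sum_{k=0}^{\infty}(\Lambda_{n}^{k}\varepsilon_{n})(x)<\infty$ for every $x\in E$, the map $\mathcal{T}_{n}$ has a fixed point $Q_{n}$, given by $Q_{n}(x)=\lim_{j\to\infty}(\mathcal{T}_{n}^{j}\varphi)(x)$ (in the $\rho$-sense), with $\rho\big(\varphi(x)-Q_{n}(x)\big)\le\varepsilon_{n}^{*}(x)$ for all $x\in E$.

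I would then show that each $Q_{n}$ is quadratic. Writing $D_{\xi}(x,y):=\xi(x+y)+\xi(x-y)-2\xi(x)-2\xi(y)$, a direct computation yields the transfer identity
$$D_{\mathcal{T}_{n}\xi}(x,y)=\tfrac{1}{2}D_{\xi}((n+1)x,(n+1)y)+\tfrac{1}{2}D_{\xi}((1-n)x,(1-n)y)-D_{\xi}(nx,ny);$$
iterating it from $\rho(D_{\varphi}(x,y))\le\alpha(x,y)$ and using $\lim_{n}\alpha(nx,ny)=0$ together with $\Delta_{2}$, one gets $\rho\big(D_{\mathcal{T}_{n}^{j}\varphi}(x,y)\big)\to 0$ as $j\to\infty$, hence $D_{Q_{n}}(x,y)=0$ for all $x,y\in E$. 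Finally I would let $n\to\infty$: the two asymptotic conditions on $\alpha$ force $\varepsilon_{n}^{*}(x)\to0$ for each fixed $x$, so decomposing $D_{\varphi}=D_{Q_{n}}+D_{\varphi-Q_{n}}$, using $D_{Q_{n}}\equiv0$, and bounding $D_{\varphi-Q_{n}}$ by convexity and $\Delta_{2}$ in terms of $\varepsilon_{n}^{*}$ evaluated at $x+y$, $x-y$, $x$, $y$, I conclude $\rho(D_{\varphi}(x,y))=0$; since a modular vanishes only at the zero vector, this gives $D_{\varphi}(x,y)=0$, i.e.\ $\varphi$ is quadratic on $E$.

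The step I expect to be the main obstacle is the control of $\varepsilon_{n}^{*}(x)=\sum_{k}(\Lambda_{n}^{k}\varepsilon_{n})(x)$: both its convergence for fixed $n$ and, more delicately, the fact that $\varepsilon_{n}^{*}(x)\to0$ as $n\to\infty$. Since $\rho$ is only convex with the $\Delta_{2}$-condition, the total ``mass'' of $\Lambda_{n}$ equals $\tau_{2}\ge 2$, so convergence of the series is not automatic and one must exploit the precise decay of $\alpha$ — at $(x,nx)$ and at $(nx,ny)$ — to dominate $\Lambda_{n}^{k}\varepsilon_{n}$ at the dilated arguments $mx$ with $m$ comparable to $n^{k}$. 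One also has to be careful at every point where $\Delta_{2}$ or convexity is invoked, because $\rho(\lambda u)\le\lambda\rho(u)$ holds only for $|\lambda|\le1$; each multiplication by a scalar of modulus greater than $1$ must be compensated by a factor $\tau_{2}$.
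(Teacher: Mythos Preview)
Your plan has a genuine gap at exactly the point you flag as ``the main obstacle'': the convergence of $\varepsilon_{n}^{*}(x)=\sum_{k\ge0}(\Lambda_{n}^{k}\varepsilon_{n})(x)$ cannot be extracted from the hypotheses. The assumptions on $\alpha$ are purely qualitative pointwise limits --- $\alpha(x,nx)\to0$ and $\alpha(nx,ny)\to0$ --- with no rate whatsoever. Since the operator $\Lambda_{n}$ has total mass $\tau_{2}\ge2$, you would need $\alpha$ evaluated at the dilated points to decay at least geometrically in the dilation, and nothing in the statement gives this. A function such as $\alpha(x,y)=1/\log(\|x\|+\|y\|+2)$ satisfies both limit conditions, yet for every fixed $n$ one has $\varepsilon_{n}(px)=\alpha(px,npx)\gtrsim 1/\log|p|$, so $(\Lambda_{n}^{k}\varepsilon_{n})(x)\gtrsim \tau_{2}^{k}/(k\log n)$ and the series diverges. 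Thus the Brzd\c{e}k--Ciepli\'nski machinery simply does not engage here, and neither the existence of $Q_{n}$ nor the final estimate $\varepsilon_{n}^{*}(x)\to0$ can be justified.

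The paper's proof bypasses this entirely by using your ingredients \emph{once} instead of iterating. With your notation, it writes
\[
D_{\varphi}(x,y)=\bigl[D_{\varphi}(x,y)-D_{\mathcal{T}_{n}\varphi}(x,y)\bigr]+D_{\mathcal{T}_{n}\varphi}(x,y).
\]
The bracket is a fixed linear combination of the four differences $\varphi(z)-(\mathcal{T}_{n}\varphi)(z)$ for $z\in\{x,y,x+y,x-y\}$, each of which has $\rho$-value at most $\tfrac12\alpha(z,nz)\to0$. The second summand is handled by the very transfer identity you wrote down,
\[
D_{\mathcal{T}_{n}\varphi}(x,y)=\tfrac12 D_{\varphi}((n{+}1)x,(n{+}1)y)+\tfrac12 D_{\varphi}((1{-}n)x,(1{-}n)y)-D_{\varphi}(nx,ny),
\]
whose three pieces are bounded in $\rho$ by $\alpha$ at scaled arguments and hence tend to $0$ by the second hypothesis. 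A single convex split (the paper uses weights summing to $1$ on seven blocks, plus a few $\Delta_{2}$-doublings) then gives $\rho\bigl(cD_{\varphi}(x,y)\bigr)\to0$ for a fixed $c>0$, so $D_{\varphi}\equiv0$. In short: drop the fixed-point iteration, keep your operator $\mathcal{T}_{n}$ and your transfer identity, and take a single limit in $n$.
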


\begin{proof}
	Letting $y=nx$ in \eqref{eq1t2}, we obtain
	\begin{equation*}\label{eq5t2}
		\rho\left(\frac{1}{2}\varphi(x+nx)+\frac{1}{2}\varphi(x-nx)-\varphi(x)-\varphi(nx)\right)\leq \frac{1}{2}\alpha (x,nx),
	\end{equation*}
	for all $x\in E$. 
	Hence, for all $x, y \in E$, we have
	\begin{equation*}
		\rho\left(\frac{1}{2}\varphi(y+ny)+\frac{1}{2}\varphi(y-ny)-\varphi(y)-\varphi(ny)\right)\leq \frac{1}{2}\alpha (y,ny),
	\end{equation*}
	\begin{multline*}
		\rho\left(\frac{1}{2}\varphi\big(x+y+n(x+y)\big)+\frac{1}{2}\varphi\big(x+y-n(x+y)\big)-\varphi(x+y)-\varphi\big(n(x+y)\big)\right)\\ \leq\frac{1}{2} \alpha (x+y,n(x+y)),
	\end{multline*}
	and
	\begin{multline*}
		\rho\left(\frac{1}{2}\varphi\big(x-y+n(x-y)\big)+\frac{1}{2}\varphi\big(x-y-n(x-y)\big)-\varphi(x-y)-\varphi\big(n(x-y)\big)\right)\\\leq \frac{1}{2}\alpha (x-y,n(x-y)),
	\end{multline*}
	for all $x,y\in E$. Letting $n\rightarrow\infty$, we get
	\begin{equation*}
		\varphi(x)=\rho-\lim_{n\rightarrow\infty}\left(\frac{1}{2}\varphi\big((n+1)x\big)+\frac{1}{2}\varphi\big((1-n)x\big)-\varphi(nx)\right),
	\end{equation*}
	\begin{equation*}
		\varphi(y)=\rho-\lim_{n\rightarrow\infty}\left(\frac{1}{2}\varphi\big((n+1)y\big)+\frac{1}{2}\varphi\big((1-n)y\big)-\varphi(ny)\right),
	\end{equation*}
	
	\begin{equation*}
		\varphi(x+y)=\rho-\lim_{n\rightarrow\infty}\left(\frac{1}{2}\varphi\big((n+1)(x+y)\big)+\frac{1}{2}\varphi\big((1-n)(x+y)\big)-\varphi\big(n(x+y)\big)\right),
	\end{equation*}
	\begin{equation*}
		\varphi(x-y)=\rho-\lim_{n\rightarrow\infty}\left(\frac{1}{2}\varphi\big((n+1)(x-y)\big)+\frac{1}{2}\varphi\big((1-n)(x-y)\big)-\varphi\big(n(x-y)\big)\right),
	\end{equation*}
	Then, we have
	\begin{align*}
			&\rho\left(\frac{1}{7}\varphi(x+y) + \frac{1}{7}\varphi(x-y) - \frac{2}{7}\varphi(x) - \frac{2}{7}\varphi(y)\right)
			\\ 
			&\leq \frac{1}{7} \rho\left(\varphi(x+y) - \left(\frac{1}{2}\varphi((n+1)(x+y)) + \frac{1}{2}\varphi((1-n)(x+y)) - \varphi(n(x+y))\right)\right)\\
		&+ \frac{1}{7} \rho\left(\varphi(x-y) - \left(\frac{1}{2}\varphi((n+1)(x-y)) + \frac{1}{2}\varphi((1-n)(x-y)) - \varphi(n(x-y))\right)\right)\\
		&+ \frac{2}{7} \rho\left(\varphi(x) - \left(\frac{1}{2}\varphi((n+1)x) + \frac{1}{2}\varphi((1-n)x) - \varphi(nx)\right)\right)\\
		&+ \frac{2}{7} \rho\left(\varphi(y) - \left(\frac{1}{2}\varphi((n+1)y) + \frac{1}{2}\varphi((1-n)y) - \varphi(ny)\right)\right)\\
		&+ \frac{1}{7} \rho\left(\frac{1}{2} \varphi((n+1)(x+y)) + \frac{1}{2} \varphi((1-n)(x+y)) - \varphi(n(x+y)) \right)\\
		&+ \frac{1}{7} \rho\left(\frac{1}{2} \varphi((n+1)(x-y)) + \frac{1}{2} \varphi((1-n)(x-y)) - \varphi(n(x-y)) \right).\\
		&\leq \frac{1}{7} \rho\left(\varphi(x+y) - \left(\frac{1}{2} \varphi((n+1)(x+y)) + \frac{1}{2} \varphi((1-n)(x+y)) - \varphi(n(x+y))\right)\right)\\
		&+ \frac{1}{7} \rho\left(\varphi(x-y) - \left(\frac{1}{2} \varphi((n+1)(x-y)) + \frac{1}{2} \varphi((1-n)(x-y)) - \varphi(n(x-y))\right)\right)\\
		&+ \frac{2}{7} \rho\left(\varphi(x) - \left(\frac{1}{2} \varphi((n+1)x) + \frac{1}{2} \varphi((1-n)x) - \varphi(nx)\right)\right)\\
		&+ \frac{2}{7} \rho\left(\varphi(y) - \left(\frac{1}{2} \varphi((n+1)y) + \frac{1}{2} \varphi((1-n)y) - \varphi(ny)\right)\right)\\
		&+ \frac{\tau}{14} \rho\left(\frac{1}{2} \varphi((n+1)(x+y)) + \frac{1}{2} \varphi((n+1)(x-y)) - \varphi((n+1)x) - \varphi((n+1)y)\right)\\
		&+ \frac{\tau}{14} \rho\left(\frac{1}{2} \varphi((1-n)(x+y)) + \frac{1}{2} \varphi((1-n)(x-y)) - \varphi((1-n)x) - \varphi((1-n)y)\right).
	\end{align*}
	
	\begin{align*}
		&\leq \frac{1}{7} \rho\left(\varphi(x+y) - \left(\frac{1}{2} \varphi((n+1)(x+y)) + \frac{1}{2} \varphi((1-n)(x+y)) - \varphi(n(x+y))\right)\right)\\
		&+ \frac{1}{7} \rho\left(\varphi(x-y) - \left(\frac{1}{2} \varphi((n+1)(x-y)) + \frac{1}{2} \varphi((1-n)(x-y)) - \varphi(n(x-y))\right)\right)\\
		&+ \frac{2}{7} \rho\left(\varphi(x) - \left(\frac{1}{2} \varphi((n+1)x) + \frac{1}{2} \varphi((1-n)x) - \varphi(nx)\right)\right)\\
		&+ \frac{2}{7} \rho\left(\varphi(y) - \left(\frac{1}{2} \varphi((n+1)y) + \frac{1}{2} \varphi((1-n)y) - \varphi(ny)\right)\right)\\
		&+ \frac{\tau}{28} \alpha((n+1)x, (n+1)y)\\
		&+ \frac{\tau^2}{28} \rho\left(\frac{1}{2} \varphi((1-n)(x+y)) + \frac{1}{2} \varphi((1-n)(x-y)) - \varphi((1-n)x) - \varphi((1-n)y)\right)\\
		&+ \frac{\tau^2}{28} \rho\left(\frac{1}{2} \varphi(n(x+y)) + \frac{1}{2} \varphi(n(x-y)) - \varphi(nx) - \varphi(ny)\right).
	\end{align*}
		for all $x,y\in E$.  Letting $n\rightarrow\infty$, we get
	\begin{equation*}
		\rho\left(\frac{1}{7}\varphi(x+y)+\frac{1}{7}\varphi(x-y)-\frac{2}{7}\varphi(x)-\frac{2}{7}\varphi(y)\right)\rightarrow 0
	\end{equation*}
	which implies that
	\begin{equation*}
		\varphi(x+y)+\varphi(x-y)=2\varphi(x)+2\varphi(y),
	\end{equation*}
	for $x,y\in E$, which means that $\varphi$ is quadratic on $E$.
\end{proof}
\begin{corollary}
	Let $\theta\geq0$ and $p$ and $q$ be two real numbers such that  $p+q<0$. Let $\varphi:X\to Y_{\rho}$ be a mapping satisfying
	\begin{equation*}\label{eqc1}
		\rho\Big(\varphi(x+y)+\varphi(x-y)-2\varphi(x)-2\varphi(y)\Big)\leq \theta\|x\|^{p}\|y\|^{q},
	\end{equation*}
	for all $x,y\in E$. Then $\varphi$ is quadratic on $E$.
\end{corollary}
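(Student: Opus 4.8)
The plan is to read the corollary off from Theorem \ref{th2} by choosing the control function $\alpha(x,y)=\theta\|x\|^{p}\|y\|^{q}$. Here $X$ is tacitly a normed space, and one works on a set $E$ of the type required by Theorem \ref{th2} on which the norm does not vanish (essentially $X\setminus\{0\}$; nothing is lost by discarding $0$, since quadratic maps are even). With this $\alpha$ the assumed estimate $\rho\big(\varphi(x+y)+\varphi(x-y)-2\varphi(x)-2\varphi(y)\big)\le\theta\|x\|^{p}\|y\|^{q}$ is literally the hypothesis of Theorem \ref{th2}, so the whole task reduces to verifying the two asymptotic conditions on $\alpha$.

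For the diagonal condition, $\alpha(nx,ny)=\theta\,n^{p+q}\|x\|^{p}\|y\|^{q}$, and $p+q<0$ yields $n^{p+q}\to0$, so $\lim_{n\to\infty}\alpha(nx,ny)=0$ for all $x,y\in E$. For the other condition, $\alpha(x,nx)=\theta\,n^{q}\|x\|^{p+q}$, which tends to $0$ precisely when $q<0$. Since $p+q<0$, at least one of $p,q$ is negative. If $q<0$, Theorem \ref{th2} applies directly. If instead $p<0$ (with $q\ge0$), one applies the mirror form of Theorem \ref{th2}, in which the substitution $y=nx$ is replaced throughout by $x=ny$ — the argument is identical after interchanging the two variables — and which requires $\alpha(nx,x)=\theta\,n^{p}\|x\|^{p+q}\to0$, valid because $p<0$. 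In either case Theorem \ref{th2} (or its mirror) yields that $\varphi$ is quadratic on $E$.

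The step that really needs care is exactly this sign bookkeeping: the bare hypothesis $p+q<0$ does not by itself force $\alpha(x,nx)\to0$, so one must split into the cases $q<0$ and $p<0$ and, in the latter, invoke the symmetric counterpart of Theorem \ref{th2} obtained from the mirrored substitution. The accompanying point — selecting $E$ so that it retains the closure properties demanded by Theorem \ref{th2} while avoiding the zero vector, where $\|\cdot\|^{p}$ is singular for $p<0$ — is routine. Once both are settled, the corollary is immediate.
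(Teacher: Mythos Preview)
Your proposal is correct and follows essentially the same route as the paper: set $\alpha(x,y)=\theta\|x\|^{p}\|y\|^{q}$, observe that $p+q<0$ forces at least one exponent to be negative, and verify the two limit hypotheses of Theorem~\ref{th2}. The paper simply writes ``Assume that $q<0$'' and checks the conditions in that case; you are slightly more explicit in noting that when only $p<0$ one needs the mirrored substitution $x\mapsto ny$ in the proof of Theorem~\ref{th2}, but this is the same idea.
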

\begin{proof}
	Let $\alpha(x,y)=\theta\|x\|^{p}\|y\|^{q}$ for all $x,y\in E$. Since $p+q<0$, at least one of $p$ or $q$ must be negative. Assume that $q<0$, we have
	\begin{equation*}
		\lim_{n\rightarrow\infty}\alpha(x,ny)=\lim_{n\rightarrow\infty}\theta n^{q}\|x\|^{p}\|y\|^{q}=0
	\end{equation*}
	and
	\begin{equation*}
		\lim_{n\rightarrow\infty}\alpha(nx,ny)=\lim_{n\rightarrow\infty}\theta n^{p+q}\|x\|^{p}\|y\|^{q}=0.
	\end{equation*}
	Therefore, the conditions in Theorem \ref{th2} hold which means that $\varphi$ is quadratic on $E$.
\end{proof}

\section{Hyperstability of a general linear functional equation}
The following section investigates the hyperstability of a general linear functional equation, providing conditions under which approximate solutions are guaranteed to coincide exactly with the functional equation in a modular space setting.
\begin{theorem}\label{th3}
	Suppose that  $E$  is a non-empty subset of $ X$, symmetric with respect to 0 , and satisfying $x+y, x-y \in E$ and $k x \in E$ for all $x, y \in E$ and all $k \in \mathbb{K}$.
	Let $a,b\in \mathbb{K}\setminus\{0\}$ and $\alpha:E^{2}\to [0,\infty)$ be a function such that:
	\begin{equation*}\label{eq1th3}
		\lim_{n\rightarrow\infty}\alpha\big(a^{-1}(n+1)x,-b^{-1}nx\big)=0
	\end{equation*}
	and
	\begin{equation*}\label{eq2th3}
		\lim_{n\rightarrow\infty}\alpha\big(nx,ny\big)=0,
	\end{equation*}
	for all $x,y\in E\setminus\{0\}$. Let $A,B\in \mathbb{R}$, $C\in Y_{\rho}$ such that $|A|\leq 1$ and $|B|\leq 1$ and let $\varphi:X\to Y_{\rho}$ satisfying:
	\begin{equation}\label{eq3th2}
		\rho\big(\varphi(ax+by)-A\varphi(x)-B\varphi(y)-C\big)\leq \alpha(x,y),
	\end{equation}
	for all $x,y\in M_{\alpha}=\{z\in E:\|z\|\geq \alpha\}$ for some $\alpha>0$. Then $\varphi$ satisfies
	\begin{equation*}\label{eq4th3}
		\varphi(ax+by)=A\varphi(x)+B\varphi(y)+C
	\end{equation*}
	and
	\begin{equation*}\label{eq5th3}
		(A+B)\varphi(0)=A\varphi(x)+B\varphi\big(-ab^{-1}x\big),
	\end{equation*}
	for all $x,y\in E$.
\end{theorem}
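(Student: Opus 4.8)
The plan is to run the by-now standard limiting technique for hyperstability results, adapted to the modular setting, with the two limit conditions on $\alpha$ playing two separate roles. Write $D\varphi(u,v):=\varphi(au+bv)-A\varphi(u)-B\varphi(v)-C$, so that \eqref{eq3th2} reads $\rho\big(D\varphi(u,v)\big)\le\alpha(u,v)$ for $u,v\in M_\alpha$, and the goal is $D\varphi\equiv 0$ on $E^2$. First I would establish a pointwise representation of $\varphi$: for $z\in E\setminus\{0\}$ set $R_n(z):=A\varphi\big(a^{-1}(n+1)z\big)+B\varphi\big(-b^{-1}nz\big)+C$. Since $\|a^{-1}(n+1)z\|$ and $\|b^{-1}nz\|$ tend to infinity, both points eventually lie in $M_\alpha$, and because $a\,a^{-1}(n+1)z+b(-b^{-1}nz)=z$, applying \eqref{eq3th2} to this pair gives $\rho\big(\varphi(z)-R_n(z)\big)\le\alpha\big(a^{-1}(n+1)z,-b^{-1}nz\big)\to 0$ by the first limit hypothesis; hence $\varphi(z)=\rho\text{-}\lim_n R_n(z)$. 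The same manoeuvre with the pair $(nt,-ab^{-1}nt)$ for $t\neq 0$ — for which $a(nt)+b(-ab^{-1}nt)=0$ and $\alpha(nt,-ab^{-1}nt)\to 0$ by the \emph{second} hypothesis — also yields $\varphi(0)=\rho\text{-}\lim_n\big(A\varphi(nt)+B\varphi(-ab^{-1}nt)+C\big)$.

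Next I would record a purely algebraic identity. Because dilations of $x$ and of $y$ commute with the combination $ax+by$ — e.g.\ $a^{-1}(n+1)(ax+by)=a[a^{-1}(n+1)x]+b[a^{-1}(n+1)y]$ and $-b^{-1}n(ax+by)=a[-b^{-1}nx]+b[-b^{-1}ny]$ — substituting the definition of $D\varphi$ into $R_n(ax+by)$ and cancelling all $\varphi$-terms (the coefficient of $C$ being $A+B+1$ on each side) gives, for all $x,y\in E$,
\[
R_n(ax+by)-A\,R_n(x)-B\,R_n(y)-C=A\,D\varphi\big(a^{-1}(n+1)x,\,a^{-1}(n+1)y\big)+B\,D\varphi\big(-b^{-1}nx,\,-b^{-1}ny\big).
\]
Fix now $x,y\in E$ with $x,y\neq 0$ and $ax+by\neq 0$. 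Using convexity of $\rho$, the $\Delta_2$-condition and $|A|,|B|\le 1$ — which give $\rho(-w)=\rho(w)$, $\rho(\lambda w)\le|\lambda|\rho(w)$ for $|\lambda|\le 1$, and $\rho\big(\sum_i\lambda_i w_i\big)\le C_k\sum_i\rho(w_i)$ whenever $|\lambda_i|\le 1$, with $C_k$ depending only on the number of terms and $\tau_2$ — the representation of the first step shows $\rho\big((A\varphi(x)+B\varphi(y)+C)-(A R_n(x)+B R_n(y)+C)\big)\to 0$. By the identity above the bracketed $R_n$-expression equals $R_n(ax+by)-A\,D\varphi\big(a^{-1}(n+1)x,a^{-1}(n+1)y\big)-B\,D\varphi\big(-b^{-1}nx,-b^{-1}ny\big)$; here $R_n(ax+by)\to\varphi(ax+by)$ in $\rho$ by the first step, while the two $D\varphi$-terms have $\rho$-values at most $\alpha\big(a^{-1}(n+1)x,a^{-1}(n+1)y\big)$ and $\alpha\big(-b^{-1}nx,-b^{-1}ny\big)$, which tend to $0$ by the second hypothesis applied to $a^{-1}x,a^{-1}y$ and to $-b^{-1}x,-b^{-1}y$ respectively. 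Consequently the \emph{constant} vector $\varphi(ax+by)-A\varphi(x)-B\varphi(y)-C$ lies, for every large $n$, in a $\rho$-ball around $0$ of radius $o(1)$, hence equals $0$, i.e.\ $\varphi(ax+by)=A\varphi(x)+B\varphi(y)+C$.

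Finally, the degenerate configurations. If $x,y\neq 0$ but $ax+by=0$, the same computation applies with $R_n(0)=(A+B)\varphi(0)+C$ and yields $A\varphi(x)+B\varphi(-ab^{-1}x)=(A+B)\varphi(0)$, which is the second assertion; inserting this (with $x=nt$) into the $\varphi(0)$-representation of the first step forces $C=(1-A-B)\varphi(0)$, and then the first equation holds also when $ax+by=0$ and, with $x=y=0$, in full generality; the cases $x=0$ or $y=0$ follow from the already-proven equation at nonzero arguments by one more substitution. I expect the real obstacle to be not the algebra but the modular bookkeeping in the limit passage: a modular is neither homogeneous nor subadditive, so every interchange of a limit with a finite linear combination must be justified from exactly the hypotheses $|A|,|B|\le 1$ and $\Delta_2$, and one must ensure the auxiliary scaling constants never block the closing appeal to property (1) of the modular; the degenerate cases are secondary but do require attention since the representation is unavailable at $0$.
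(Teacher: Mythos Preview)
Your proposal is correct and follows essentially the same route as the paper: the same substitution $(u,v)=(a^{-1}(n+1)z,-b^{-1}nz)$ to obtain the $\rho$-limit representation of $\varphi$, the same algebraic regrouping (which the paper writes out with explicit $\tfrac14$- and $\tfrac{\tau}{8}$-weights from convexity and $\Delta_2$ rather than isolating your clean identity for $R_n(ax+by)-AR_n(x)-BR_n(y)-C$), and the same appeal to the second limit hypothesis on $\alpha$ to kill the residual $D\varphi$-terms. Your handling of the degenerate configurations $x=0$, $y=0$, $ax+by=0$ and the deduction $C=(1-A-B)\varphi(0)$ is, if anything, more explicit than the paper's.
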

\begin{proof}
	Substituting $x$ by $a^{-1}(n+1)x$ and $y$ by $-b^{-1}nx$ in \eqref{eq3th2}, we get
	\begin{equation}\label{eq6th2}
		\rho\big(\varphi(x)-A\varphi(a^{-1}(n+1)x)-B\varphi(-b^{-1}nx)-C\big)\leq \alpha\big(a^{-1}(n+1)x,-b^{-1}nx\big),
	\end{equation}
	for all $x\in E\setminus\{0\}$ and all positive integers $n\geq m$, where $a^{-1}(m+1)x,\;-b^{-1}mx\in M_{\alpha}$. Letting  $n\rightarrow\infty$ in \eqref{eq6th2}, we obtain
	\begin{equation*}
		\varphi(x)=\rho-\lim_{n\rightarrow\infty}\Big[A\varphi\big(a^{-1}(n+1)x\big)+B\varphi\big(-b^{-1}nx\big)+C\Big],\;\;\;x\in E\setminus\{0\}.
	\end{equation*}
	Therefore, we have
	\begin{align*}
		&\rho\Bigg(\frac{1}{4}\varphi(ax+by)-\frac{A}{4}\varphi(x)-\frac{B}{4}\varphi(y)-\frac{C}{4}\Bigg)\\
		&\leq \frac{1}{4}\rho\Big(\varphi(ax+by)-\big(A\varphi(a^{-1}(n+1)(ax+by))+B\varphi(-b^{-1}n(ax+by))+C\big)\Big)\\
		&+\frac{1}{4}\rho\Big(A\varphi(x)-\big(A^{2}\varphi(a^{-1}(n+1)x)+AB\varphi(-b^{-1}nx)+AC\big)\Big)\\
		&+\frac{1}{4}\rho\Big(B\varphi(y)-\big(AB\varphi(a^{-1}(n+1)y)+B^{2}\varphi(-b^{-1}nx)+BC\big)\Big)\\
		&+\frac{1}{4}\rho\Big(A\varphi(a^{-1}(n+1)(ax+by))+B\varphi(-b^{-1}n(ax+by))+C-A^{2}\varphi(a^{-1}(n+1)x)\\
		&\;\;\;-AB\varphi(-b^{-1}nx)-AC-AB\varphi(a^{-1}(n+1)y)-B^{2}\varphi(-b^{-1}ny)-BC-C\Big)\\
		&\leq \frac{1}{4}\rho\Big(\varphi(ax+by)-\big(A\varphi(a^{-1}(n+1)(ax+by))+B\varphi(-b^{-1}n(ax+by))+C\big)\Big)\\
		&+\frac{1}{4}\rho\Big(A\varphi(x)-\big(A^{2}\varphi(a^{-1}(n+1)x)+AB\varphi(-b^{-1}nx)+AC\big)\Big)\\
		&+\frac{1}{4}\rho\Big(B\varphi(y)-\big(AB\varphi(a^{-1}(n+1)y)+B^{2}\varphi(-b^{-1}nx)+BC\big)\Big)\\
		&+\frac{|A|\tau}{8}\rho\Big(\varphi(a^{-1}(n+1)(ax+by))-A\varphi(a^{-1}(n+1)x)-B\varphi(a^{-1}(n+1)y)-C\Big)\\
		&+\frac{|A|\tau}{8}\rho\Big(\varphi(-b^{-1}n(ax+by))-A\varphi(-b^{-1}nx)-B\varphi(-b^{-1}ny)-C\Big)\\
		&\leq \frac{1}{4}\rho\Big(\varphi(ax+by)-\big(A\varphi(a^{-1}(n+1)(ax+by))+B\varphi(-b^{-1}n(ax+by))+C\big)\Big)\\
		&+\frac{1}{4}\rho\Big(A\varphi(x)-\big(A^{2}\varphi(a^{-1}(n+1)x)+AB\varphi(-b^{-1}nx)+AC\big)\Big)\\
		&+\frac{1}{4}\rho\Big(B\varphi(y)-\big(AB\varphi(a^{-1}(n+1)y)+B^{2}\varphi(-b^{-1}nx)+BC\big)\Big)\\
		&+\frac{|A|\tau}{8}\alpha\big(a^{-1}(n+1)x,a^{-1}(n+1)y\big)+\frac{|A|\tau}{8}\alpha\big(-b^{-1}nx,-b^{-1}ny\big)\\
		&\rightarrow 0\;\;\text{as}\;\;\;n\rightarrow\infty
	\end{align*}
	Hence, we obtain
	\begin{equation*}
		\varphi(ax+by)=A\varphi(x)+B\varphi(y)+C.
	\end{equation*}
	Moreover, we have
	\begin{align*}
		&\rho\left(\frac{1}{3}(A+B)\varphi(0) - \frac{1}{3}A\varphi(x) - \frac{1}{3}B\varphi\left(-ab^{-1}x\right)\right) \\
		&\leq \frac{1}{3}\rho\left(A\varphi(x) - \left(A^{2}\varphi\left(a^{-1}(n+1)x\right) + AB\varphi\left(-b^{-1}nx\right) + AC\right)\right) \\
		&\quad + \frac{1}{3}\rho\left(B\varphi\left(-ab^{-1}nx\right) - \left(AB\varphi\left(-b^{-1}(n+1)x\right) + B^{2}\varphi\left(ab^{-2}nx\right) + BC\right)\right) \\
		&\quad + \frac{1}{3}\rho\left((A+B)\varphi(0) - A^{2}\varphi\left(a^{-1}(n+1)x\right) + AB\varphi\left(-b^{-1}nx\right) - AC \right. \\
		&\quad \left. - AB\varphi\left(-b^{-1}(n+1)x\right) - B^{2}\varphi\left(ab^{-2}nx\right) - BC\right) \\
		&\leq \frac{1}{3}\rho\left(A\varphi(x) - \left(A^{2}\varphi\left(a^{-1}(n+1)x\right) + AB\varphi\left(-b^{-1}nx\right) + AC\right)\right) \\
		&\quad + \frac{1}{3}\rho\left(B\varphi\left(-ab^{-1}nx\right) - \left(AB\varphi\left(-b^{-1}(n+1)x\right) + B^{2}\varphi\left(ab^{-2}nx\right) + BC\right)\right) \\
		&\quad + \frac{|A|\tau}{6}\rho\left(\varphi(0) - A\varphi\left(a^{-1}(n+1)x\right) - B\varphi\left(-b^{-1}(n+1)x\right) - C\right) \\
		&\quad + \frac{|B|\tau}{6}\rho\left(\varphi(0) - A\varphi\left(-b^{-1}nx\right) - B\varphi\left(ab^{-2}nx\right) - C\right) \\
		&\leq \frac{1}{3}\rho\left(A\varphi(x) - \left(A^{2}\varphi\left(a^{-1}(n+1)x\right) + AB\varphi\left(-b^{-1}nx\right) + AC\right)\right) \\
		&\quad + \frac{1}{3}\rho\left(B\varphi\left(-ab^{-1}nx\right) - \left(AB\varphi\left(-b^{-1}(n+1)x\right) + B^{2}\varphi\left(ab^{-2}nx\right) + BC\right)\right) \\
		&\quad + \frac{|A|\tau}{6}\alpha\left(a^{-1}(n+1)x, -b^{-1}(n+1)x\right) + \frac{|B|\tau}{6}\alpha\left(-b^{-1}nx, ab^{-2}nx\right) \\
		&\rightarrow 0 \quad \text{as} \quad n \to \infty.
	\end{align*}
		Hence,
	\begin{equation}\label{eqA+B}
		(A+B)\varphi(0)=A\varphi(x)+B\varphi(-ab^{-1}x\big).
	\end{equation}
	Now, if we replace $x$ by $bnx$ and $y$ by $-anx$ in the inequality \eqref{eq3th2}, we obtain:
	$$
	\rho\big(\varphi(0)-A\varphi(bnx)-B\varphi(-anx)-C\big)\leq \alpha(bnx,-anx) \longrightarrow 0 \quad \text{as} \quad n \to \infty.
	$$
	Hence, 
	$$
	\varphi(0) = \rho-\lim_{n\to\infty} \big[A\varphi(bnx) + B\varphi(-anx) + C\big].
	$$
	On the other hand, if we replace $x$ by $bnx$ in \eqref{eqA+B}, we get:
	$$(A+B)\varphi(0)=A\varphi(bnx)+B\varphi(-anx).$$
	Then $$\rho\big((1-A-B)\varphi(0)-C\big)=\rho\big(\varphi(0)-A\varphi(bnx)-B\varphi(-anx)-C)$$
	and therefore,
	$$C=(1-A-B)\varphi(0).$$
\end{proof}
\begin{corollary}
	Let $a,b\in \mathbb{K}\setminus\{0\}$ and let $\varphi:E\to Y_{\rho}$ be a function. Take $\theta$,$\theta^{'}\geq0$, and $p,q,r$ be real numbers. Suppose that one of the following conditions holds:\\
	$(i)p+q+r\leq0$ and 
	\begin{equation*}\label{eq3th3}
		\rho\big(\varphi(ax+by)-A\varphi(x)-B\varphi(y)-C\big)\leq\|x\|^{p}\|y\|^{q}\big(\theta\|x+y\|^{r}+\theta\|x-y\|^{r}\big)
	\end{equation*}
	$(ii)p+q\leq0$ and 
	\begin{equation*}\label{eq3th3}
		\rho\big(\varphi(ax+by)-A\varphi(x)-B\varphi(y)-C\big)\leq\theta\|x\|^{p}\|y\|^{q}
	\end{equation*}
	$(iii)p,q<0$ and 
	\begin{equation*}\label{eq3th3}
		\rho\big(\varphi(ax+by)-A\varphi(x)-B\varphi(y)-C\big)\leq\theta\|x\|^{p}+\theta^{'}\|y\|^{q}
	\end{equation*}
	Then $\varphi$ satisfies
	\begin{equation*}\label{eq4th3}
		\varphi(ax+by)=A\varphi(x)+B\varphi(y)+C
	\end{equation*}
	and
	\begin{equation*}\label{eq5th3}
		(A+B)\varphi(0)=A\varphi(x)+B\varphi\big(-ab^{-1}x\big),
	\end{equation*}
	for all $x,y\in M_{\alpha}=\{z\in E:\|z\|\geq \alpha\}$ for some $\alpha>0$.
\end{corollary}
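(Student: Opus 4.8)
The plan is to reduce all three cases to Theorem \ref{th3}: in case $(j)$ one takes the control function $\alpha:E^{2}\to[0,\infty)$ to be precisely the right-hand side of the corresponding inequality, so that $\rho\big(\varphi(ax+by)-A\varphi(x)-B\varphi(y)-C\big)\le\alpha(x,y)$ holds on $M_{\alpha}\times M_{\alpha}$ by assumption. The closure properties of $E$ and the standing restrictions on $a,b\in\mathbb{K}\setminus\{0\}$ and on $A,B,C$ are exactly those of Theorem \ref{th3}, so the whole task is to verify the two limit hypotheses, namely $\lim_{n\to\infty}\alpha\big(a^{-1}(n+1)x,-b^{-1}nx\big)=0$ and $\lim_{n\to\infty}\alpha(nx,ny)=0$ for all $x,y\in E\setminus\{0\}$. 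Note that for fixed $x\neq0$ the norms $\|a^{-1}(n+1)x\|$ and $\|b^{-1}nx\|$ tend to $\infty$, so the pairs appearing in the first limit lie in $M_{\alpha}$ for all sufficiently large $n$; hence the inequality of Theorem \ref{th3} is genuinely available there and, once the two limits are checked, that theorem yields both asserted identities.

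Cases $(iii)$ and $(ii)$ are straightforward. In $(iii)$ one has $\alpha(x,y)=\theta\|x\|^{p}+\theta'\|y\|^{q}$ with $p,q<0$; using homogeneity of the norm, each substitution multiplies the two summands by factors of the form $|c|^{p}n^{p}$ or $|c|^{q}n^{q}$, and since $p,q<0$ both limits vanish. In $(ii)$ one has $\alpha(x,y)=\theta\|x\|^{p}\|y\|^{q}$, so $\alpha\big(a^{-1}(n+1)x,-b^{-1}nx\big)=\theta|a|^{-p}|b|^{-q}(n+1)^{p}n^{q}\|x\|^{p+q}$ and $\alpha(nx,ny)=\theta n^{p+q}\|x\|^{p}\|y\|^{q}$, both controlled by the sign of $p+q\le0$. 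In $(i)$, with $\alpha(x,y)=\|x\|^{p}\|y\|^{q}\big(\theta\|x+y\|^{r}+\theta\|x-y\|^{r}\big)$, the substitution $x\mapsto a^{-1}(n+1)x$, $y\mapsto -b^{-1}nx$ turns the two inner norms into $\|(a^{-1}(n+1)-b^{-1}n)x\|$ and $\|(a^{-1}(n+1)+b^{-1}n)x\|$, each of which is $O(n)$; collecting the powers of $n$, $\alpha\big(a^{-1}(n+1)x,-b^{-1}nx\big)$ becomes a finite sum of terms whose exponent of $n$ is at most $p+q+r\le0$, while $\alpha(nx,ny)=n^{p+q+r}\|x\|^{p}\|y\|^{q}\big(\theta\|x+y\|^{r}+\theta\|x-y\|^{r}\big)$.

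The one place that needs genuine care is the asymptotic bookkeeping in case $(i)$. When $r\ge0$, the $O(n)$ upper bound on $\|(a^{-1}(n+1)\pm b^{-1}n)x\|$ suffices to bound $\|x\pm y\|^{r}$ by a multiple of $n^{r}$; but when $r<0$ one instead needs a linear lower bound on those norms, which is present as long as $a^{-1}(n+1)\pm b^{-1}n$ does not remain bounded — that is, away from the degenerate situations $a=\pm b$ — and in each such degenerate situation exactly one of the two inner norms reduces to a nonzero constant, so that term must be estimated separately. Once this case distinction is made explicit, everything else reduces to the elementary facts that $n^{s}\to0$ as $n\to\infty$ for $s<0$ and that $(n+1)^{p}n^{q}\sim n^{p+q}$, so no further difficulty arises.
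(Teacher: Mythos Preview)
The paper states this corollary without proof, so there is nothing to compare against; your reduction to Theorem~\ref{th3} with $\alpha$ taken to be the given right-hand side is clearly the intended argument, and for strict inequalities your verification of the two limit hypotheses is correct.

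Where your sketch has a genuine gap is at the boundaries and in the degenerate subcase you yourself flag. In case $(ii)$ with $p+q=0$ one gets $\alpha(nx,ny)=\theta\|x\|^{p}\|y\|^{q}$, a nonzero constant, so the hypothesis $\lim_{n}\alpha(nx,ny)=0$ of Theorem~\ref{th3} fails; the same happens in case $(i)$ when $p+q+r=0$. Your own closing line invokes ``$n^{s}\to0$ for $s<0$'', which is exactly what is unavailable at the boundary, so the argument as written does not cover the stated non-strict hypotheses. Moreover, in case $(i)$ with $a=\pm b$ and $r<0$ you note that one inner norm becomes a nonzero constant but do not finish the estimate: that summand is then of order $n^{p+q}$, not $n^{p+q+r}$, and $p+q<0$ is \emph{not} implied by $p+q+r\le0$ (e.g.\ $p=q=1$, $r=-3$), so the first limit can even blow up. The cleanest repair is to replace the non-strict inequalities by strict ones (as the paper does in the corollary following Theorem~\ref{th2}); with that change your argument is complete away from $a=\pm b$, while in the degenerate situation one must additionally assume $p+q<0$ for case $(i)$.
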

\begin{remark}
	\begin{itemize}
		\item [(i)] If $a=b=A=B=1$ and $C=0$, we obtain the hyperstability result for the additive functional equation
		$$
		\varphi(x+y)=\varphi(x)+\varphi(y)
		$$
		in modular space.\\
		\item [(ii)] if $a=b=A=B=\dfrac{1}{2}$ and $C=0$, we obtain the hyperstability result for the Jensen functional equation
		$$
		\varphi\big(\frac{x+y}{2}\big)=\frac{1}{2}\varphi(x)+\frac{1}{2}\varphi(y)
		$$
		in modular space.
	\end{itemize}
\end{remark}
\section{Hyperstability of the $n$-dimensional quadratic functional equation}
In this section, we study the hyperstability of the $n$-dimensional quadratic functional equation.
\begin{theorem}\label{th1}
	Suppose that  $E$   is a non-empty subset of $X$ that is symmetric with respect to $0$ and satisfies $x+y, x-y \in E$ and $k x \in E$ for all $x, y \in E$ and all $k \in \mathbb{K}$. Let $f$ : $E \rightarrow Y$ and $\varphi: E^m \rightarrow[0, \infty)$ be two functions that satisfy the following conditions
	\begin{equation}\label{cond1th3}
		\lim_{n\rightarrow\infty}\varphi\big(x,nx,\cdots,nx\big)=0,
	\end{equation}
	\begin{equation*}\label{cond2th3}
		\lim_{n\rightarrow\infty}\varphi\big(nx_{1},nx_{2},\cdots,nx_{m}\big)=0
	\end{equation*}
	and
	\begin{equation}\label{eq11th3}
		\rho\left(f\left(\sum_{i=1}^{m}x_{i}\right)+\sum_{1\leq i<j\leq m}f(x_{i}-x_{j})- m\sum_{i=1}^{m}f(x_{i})\right)\leq \varphi(x_{1},x_{2},\cdots,x_{m}),
	\end{equation}
	for all $x_{1},x_{2},\cdots,x_{m}\in E$. Then $f$ satisfies equation \eqref{eqnd} on $E$.
\end{theorem}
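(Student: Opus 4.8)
The plan is to follow the pattern of Theorems \ref{th2} and \ref{th3}: first pin down $f(0)$, then extract an asymptotic representation of $f(x)$ as a $\rho$-limit of values of $f$ at large arguments, and finally feed that representation back into the defect of \eqref{eqnd} and squeeze it to $0$ using the two vanishing hypotheses on $\varphi$. I would begin with $x_{1}=\cdots=x_{m}=0$ in \eqref{eq11th3}, which gives $\rho\bigl((1+\binom{m}{2}-m^{2})f(0)\bigr)\le\varphi(0,\dots,0)$; since $\varphi(0,\dots,0)=\lim_{n}\varphi(n\cdot 0,\dots,n\cdot 0)=0$ by the hypothesis $\lim_{n}\varphi(nx_{1},\dots,nx_{m})=0$, and the scalar $1+\binom{m}{2}-m^{2}=1-\tfrac{m(m+1)}{2}$ is nonzero, condition (1) of Definition \ref{Definition1.1} forces $f(0)=0$. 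Next I would substitute $x_{1}=x$ and $x_{2}=\cdots=x_{m}=nx$ in \eqref{eq11th3}: the first argument becomes $(1+(m-1)n)x$, the $m-1$ differences $x_{1}-x_{j}$ all equal $(1-n)x$, and the remaining $\binom{m-1}{2}$ differences vanish (contributing nothing, since $f(0)=0$). Writing
$$G_{x,n}:=\tfrac{1}{m}f\bigl((1+(m-1)n)x\bigr)+\tfrac{m-1}{m}f\bigl((1-n)x\bigr)-(m-1)f(nx),$$
the inequality reads $\rho\bigl(m\,(G_{x,n}-f(x))\bigr)\le\varphi(x,nx,\dots,nx)$, so that by convexity $\rho(G_{x,n}-f(x))\le\tfrac1m\rho\bigl(m(G_{x,n}-f(x))\bigr)\le\tfrac1m\varphi(x,nx,\dots,nx)\to 0$ by \eqref{cond1th3}; hence $f(x)=\rho\text{-}\lim_{n\to\infty}G_{x,n}$ for every $x\in E$ (trivially for $x=0$, where $G_{0,n}=0$).

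The core of the argument is then the following. Write $s:=\sum_{i=1}^{m}x_{i}$, let $D(x_{1},\dots,x_{m}):=f(s)+\sum_{1\le i<j\le m}f(x_{i}-x_{j})-m\sum_{i=1}^{m}f(x_{i})$ be the defect, and for a scalar $\mu$ abbreviate $D(\mu x_{\bullet}):=D(\mu x_{1},\dots,\mu x_{m})$. Every argument occurring in $D$ is a ``building block'' $b\in\{s\}\cup\{x_{i}-x_{j}:i<j\}\cup\{x_{i}\}$ of $(x_{1},\dots,x_{m})$, and these blocks are homogeneous: the blocks of $(\mu x_{1},\dots,\mu x_{m})$ are exactly the $\mu b$. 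Inserting $f(b)=\bigl(f(b)-G_{b,n}\bigr)+G_{b,n}$ into each term of $D$ and regrouping the $G_{b,n}$-parts according to their three levels $1+(m-1)n$, $1-n$, $n$ yields the exact identity, valid for every $n$,
\begin{multline*}
	D(x_{1},\dots,x_{m})=\bigl(f(s)-G_{s,n}\bigr)+\sum_{1\le i<j\le m}\bigl(f(x_{i}-x_{j})-G_{x_{i}-x_{j},n}\bigr)\\
	-m\sum_{i=1}^{m}\bigl(f(x_{i})-G_{x_{i},n}\bigr)\\
	+\tfrac1m D\bigl((1+(m-1)n)x_{\bullet}\bigr)+\tfrac{m-1}{m}D\bigl((1-n)x_{\bullet}\bigr)-(m-1)D\bigl(nx_{\bullet}\bigr).
\end{multline*}
I would then set $\lambda:=1/W$ with $W:=m^{2}+m+1+\binom{m}{2}$, the total weight of the right-hand side. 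Since $\rho(-u)=\rho(u)$, the terms of $\lambda D$ form a combination with nonnegative coefficients, each at most $1$ and summing to $1$, so the generalized convexity inequality of the Remark bounds $\rho(\lambda D)$ by a finite sum of two kinds of terms. Those of the form $c\,\rho(f(b)-G_{b,n})\le\tfrac{c}{m}\varphi(b,nb,\dots,nb)$ tend to $0$ by \eqref{cond1th3} applied with $x$ replaced by $b$. Those of the form $c\,\rho\bigl(D(\nu_{n}x_{\bullet})\bigr)\le c\,\varphi(\nu_{n}x_{1},\dots,\nu_{n}x_{m})$, with $\nu_{n}\in\{1+(m-1)n,\,1-n,\,n\}$, tend to $0$ by the hypothesis $\lim_{n}\varphi(nx_{1},\dots,nx_{m})=0$: directly when $\nu_{n}=n$; along the subsequence $\nu_{n}=1+(m-1)n$; and, when $\nu_{n}=1-n$, after using the symmetry of $E$ to rewrite $(1-n)x_{i}=(n-1)(-x_{i})$. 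Hence $\rho(\lambda D)\to 0$ as $n\to\infty$; but $\rho(\lambda D)$ is independent of $n$, so $\rho(\lambda D)=0$, whence $\lambda D=0$ and therefore $D(x_{1},\dots,x_{m})=0$, i.e. \eqref{eqnd} holds on $E$.

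I expect the main obstacle to be the bookkeeping hidden in the displayed identity: one has to verify that inserting the representation of $f$ at each of the $1+\binom{m}{2}+m$ arguments of $D$ and regrouping really collapses $\sum_{b}\varepsilon_{b}G_{b,n}$ into $\tfrac1m D\bigl((1+(m-1)n)x_{\bullet}\bigr)+\tfrac{m-1}{m}D\bigl((1-n)x_{\bullet}\bigr)-(m-1)D\bigl(nx_{\bullet}\bigr)$ (this is precisely where homogeneity of the blocks enters), and then that the normalization $\lambda=1/W$ keeps every coefficient in $[0,1]$ with total mass $1$, so that the convex-modular estimate applies in a single pass. The degenerate configurations, where some $x_{i}-x_{j}$ or some $x_{i}$ is $0$, cause no trouble, since $f(0)=0$ forces $G_{0,n}=0=f(0)$ and the corresponding terms simply drop out.
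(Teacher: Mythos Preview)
Your argument is correct and follows the same overall strategy as the paper: obtain an asymptotic representation $f(x)=\rho\text{-}\lim_n G_{x,n}$ from the substitution $x_1=x,\ x_2=\cdots=x_m=nx$, insert it term-by-term into the defect $D(x_1,\dots,x_m)$, regroup the $G$-parts into scaled defects $D(\nu_n x_\bullet)$, and let $n\to\infty$. The algebraic identity you display is exactly what the paper is computing, and your verification that the blocks are homogeneous is precisely why the regrouping collapses to $\tfrac1m D((1+(m-1)n)x_\bullet)+\tfrac{m-1}{m}D((1-n)x_\bullet)-(m-1)D(nx_\bullet)$.

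Two differences are worth recording. First, you begin by proving $f(0)=0$ from $\varphi(0,\dots,0)=0$; the paper silently drops the $\binom{m-1}{2}f(0)$ term that arises from the pairs $x_i-x_j$ with $i,j\ge 2$, so your extra step actually patches a small gap in the published proof. Second, and more interestingly, your single normalization $\lambda=1/W$ with $W=m^2+m+1+\binom{m}{2}$ lets you apply the convex-modular inequality in one pass and never invokes the $\Delta_2$-condition, whereas the paper splits the last block repeatedly and picks up factors $k_2,\ k_2^2,\ k_2^2k_{m-1}$ from $\Delta_s$. So your route shows that the $\Delta_2$-hypothesis is in fact unnecessary for this theorem; the paper's approach trades that economy for avoiding the bookkeeping of your exact identity, but your weight count is correct ($1+\binom{m}{2}+m\cdot m+\tfrac1m+\tfrac{m-1}{m}+(m-1)=W$), the sign-absorption via $\rho(-u)=\rho(u)$ is legitimate, and the subsequence reasoning for $\nu_n\in\{1+(m-1)n,\,1-n\}$ is sound.
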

\begin{proof}
	Letting $x_{i}=nx$ with $i\geq 2$ and $n\in \mathbb{N}$ in \eqref{eq11th3}, we obtain 
	\begin{multline*}
		\rho\left(f\big((1+mn-n)x\big)+(m-1)f((1-n)x)-m(m-1)f(nx)-mf(x)\right)  \\ \leq \varphi\big(x,nx,\cdots,nx\big),
	\end{multline*}
	Hence
	\begin{multline*}
		\rho\left(\frac{1}{m}f\big((1+mn-n)x\big)+\frac{m-1}{m}f((1-n)x)-(m-1)f(nx)-f(x)\right) \\ \leq \frac{1}{m}\varphi\big(x,nx,\cdots,nx\big),
	\end{multline*}
	for all $x\in X$ and all $n\in \mathbb{N}$. In view of (\ref{cond1th3}), we deduce that
	\begin{equation*}\label{eq3th3}
		f(x)=\rho-\lim_{n\rightarrow\infty}\bigg[\dfrac{1}{m}f\big((1+mn-n)x\big)+\dfrac{(m-1)}{m}f((1-n)x)-(m-1)f(nx)\bigg],
	\end{equation*}
	for all $x\in X$. Therefore,
	\begin{multline*}\label{eq4th3}
	   f\left(\sum_{i=1}^{m}x_{i}\right)=\rho-\lim_{n\rightarrow\infty}\Bigg\{\frac{1}{m}f\left((1+mn-n)\sum_{i=1}^{m}x_{i}\right)
	   \\
	   -\frac{(m-1)}{m}f\left((1-n)\sum_{i=1}^{m}x_{i}\right)-(m-1)f\left(n\sum_{i=1}^{m}x_{i}\right)\Bigg\},
	\end{multline*}
	\begin{equation*}\label{eq5th3}
		f(x_{i})=\rho-\lim_{n\rightarrow\infty}\Bigg\{f\big((1+mn-n)x_{i}\big)-(m-1)f((1-n)x_{i})-m(m-1)f\big(nx_{i}\big)\Bigg\},
	\end{equation*}
	and
	\begin{align*}\label{eq6th3}
		f\big(x_{i}-x_{j}\big)=\rho-\lim_{n\rightarrow\infty}\Bigg\{&\frac{1}{m}f\big((1+mn-n)(x_{i}-x_{j})\big)+\frac{(m-1)}{m}f\big((1-n)(x_{i}-x_{j})\big)\nonumber\\
		&-(m-1)f\big(n(x_{i}-x_{j})\big)\Bigg\},
	\end{align*}
	for all $x_{1},x_{2},\cdots,x_{m}\in E$. Now, we have:
	\begin{align*}
		& \rho\left[\frac{2}{3 m^{2}-m+4}\left(f\left(\sum_{i=1}^{m} x_{i}\right)+\sum_{1 \leq i<j \leq m} f\left(x_{i}-x_{j}\right)-m \sum_{i=1}^{m} f\left(x_{i}\right)\right)\right. \\
		& \leq \frac{2}{3 m^{2}-m+4} \rho\left[f\left(\sum_{i=1}^{m} x_{i}\right)-\frac{1}{m} f\left((1+m n-n) \sum_{i=1}^{m} x_{i}\right)\right. \\
		& \left.-\frac{m-1}{m} f\left((1-n) \sum_{i=1}^{m} x i\right)+(m-1) f\left(n \sum_{i=1}^{m} x i\right)\right] \\
		& +\frac{2}{3 m^{2}-m+4} \sum_{1 \leq i<j \leq m} \rho\left[f\left(x_{i}-x_{j}\right)-\frac{1}{m} f\left((1+m n-n)\left(x_{i}-x_{j}\right)\right)\right. \\
		& \left.-\frac{m-1}{m} f\left((1-n)\left(x_{i}-x_{j}\right)\right)+(m-1) f\left(n\left(x_{i}-x_{j}\right)\right)\right] \\
		& +\frac{2 m}{3 m^{2}-m+4} \sum_{i=1}^{m} \rho\left[f\left(x_{i}\right)-\frac{1}{m} f\left((1+m n-n) x_{i}\right)\right. \\
		& \left.-\frac{m-1}{m} f\left((1-n) x_{i}\right)+(m-1) f\left(n x_{i}\right)\right] \\
		& +\frac{2}{3 m^{2}-m+4} \rho\left[\frac{1}{m} f\left((1+m n-n) \sum_{i=1}^{m} x_{i}\right)+\frac{m-1}{m} f\left((1-n) \sum_{i=1}^{m} x_{i}\right)\right] \\
		& -(m-1) f\left(n \sum_{i=1}^{m} x_{i}\right)+\frac{1}{m} \sum_{1 \leq i<j \leq m} f\left((1+m n-n)\left(x_{i}-x_{j}\right)\right) \\
		& +\frac{m-1}{m} \sum_{1\leq i<j\leq m} f\left((1-n)\left(x_{i}-x_{j}\right)\right)-(m-1) \sum_{1 \leq i<j \leq m} f\left(n\left(x_{i}-x_{j}\right)\right) \\
		& -\sum_{i=1}^{m} f\left((1+m n-n) x_{i}\right)-(m-1) \sum_{i=1}^{m} f\left((1-n) x_{i}\right) 
		\left.+m(m-1) \sum_{i=1}^{m} f\left(n x_{i}\right)\right]
	\end{align*}
	
	\begin{align*}
		&\leq \frac{2}{3 m^{2}-m+4} \rho\left[f\left(\sum_{i=1}^{m} x_{i}\right)-\frac{1}{m} f\left((1+m n-n) \sum_{i=1}^{m} x_{i}\right)\right. \\
		& \left.-\frac{m-1}{m} f\left((1-n) \sum_{i=1}^{m} x i\right)+(m-1) f\left(n \sum_{i=1}^{m} x i\right)\right] \\
		& +\frac{2}{3 m^{2}-m+4} \sum_{1 \leq i<j \leq m} \rho\left[f\left(x_{i}-x_{j}\right)-\frac{1}{m} f((1+mn-n)(x_{i}-x_{j}))\right. \\
		& \left.-\frac{m-1}{m} f\left((1-n)\left(x_{i}-x_{j}\right)\right)+(m-1) f\left(n\left(x_{i}-x_{j}\right)\right)\right] \\
		& +\frac{2 m}{3 m^{2}-m+4} \sum_{i=1}^{m} \rho\left[f\left(x_{i}\right)-\frac{1}{m} f\left((1+m n-n) x_{i}\right)\right. \\
		& \left.-\frac{m-1}{m} f\left((1-n) x_{i}\right)+(m-1) f\left(n x_{i}\right)\right]\\
		&+\frac{k_{2}}{3 m^{2}-m+4}\rho\bigg\{\frac{1}{m} f\left((1+m n-n) \sum_{i=1}^{m} x_{i}\right) \\
		& +\frac{1}{m} \sum_{1 \leq i<j \leq m} f\left((1+m n-n)\left(x_{i}-x_{j}\right)\right)-\sum_{i=1}^{m} f\left((1+m n-n) x_{i}\right)\bigg\}\\
		&+\frac{k_{2}}{3 m^{2}-m+4}\rho\bigg\{\frac{m-1}{m} f\left((1-n) \sum_{i=1}^{m} x_{i}\right)\\
		&+\frac{m-1}{m} \sum_{1\leq i<j\leq m} f\left((1-n)\left(x_{i}-x_{j}\right)\right)-(m-1) \sum_{i=1}^{m} f\left((1-n) x_{i}\right) \\
		&-(m-1) f\left(n \sum_{i=1}^{m} x_{i}\right)-(m-1) \sum_{1 \leq i<j \leq m} f\left(n\left(x_{i}-x_{j}\right)\right) 
		+m(m-1) \sum_{i=1}^{m} f\left(n x_{i}\right)\bigg\}\\
		&\leq\frac{2}{3 m^{2}-m+4} \rho\left[f\left(\sum_{i=1}^{m} x_{i}\right)-\frac{1}{m} f\left((1+m n-n) \sum_{i=1}^{m} x_{i}\right)\right. \\
		& \left.-\frac{m-1}{m} f\left((1-n) \sum_{i=1}^{m} x_{i}\right)+(m-1) f\left(n \sum_{i=1}^{m} x_{i}\right)\right] \\
		& +\frac{2}{3 m^{2}-m+4} \sum_{1 \leq i<j \leq m} \rho\left[f\left(x_{i}-x_{j}\right)-\frac{1}{m} f((1+mn-n)(x_{i}-x_{j}))\right. \\
		& \left.-\frac{m-1}{m} f\left((1-n)\left(x_{i}-x_{j}\right)\right)+(m-1) f\left(n\left(x_{i}-x_{j}\right)\right)\right] \\
		& +\frac{2 m}{3 m^{2}-m+4} \sum_{i=1}^{m} \rho\left[f\left(x_{i}\right)-\frac{1}{m} f\left((1+m n-n) x_{i}\right)\right. \\
		& \left.-\frac{m-1}{m} f\left((1-n) x_{i}\right)+(m-1) f\left(n x_{i}\right)\right]\\
		&+\frac{k_{2}}{3 m^{3}-m^{2}+4m}\rho\bigg\{ f\left((1+m n-n) \sum_{i=1}^{m} x_{i}\right) \\
		& +\sum_{1 \leq i<j \leq m} f\left((1+m n-n)\left(x_{i}-x_{j}\right)\right)-m\sum_{i=1}^{m} f\left((1+m n-n) x_{i}\right)\bigg\}
	\end{align*}
	
	\begin{align*}
		&+\frac{k_{2}^{2}}{6 m^{2}-2m+8}\rho\bigg\{\frac{m-1}{m} f\left((1-n) \sum_{i=1}^{m} x_{i}\right)\\
		&+\frac{m-1}{m} \sum_{1\leq i<j\leq m} f\left((1-n)\left(x_{i}-x_{j}\right)\right)-(m-1) \sum_{i=1}^{m} f\left((1-n) x_{i}\right)\bigg\} \\
		&+\frac{k_{2}^{2}}{6 m^{2}-2m+8}\rho\bigg\{(m-1) f\left(n \sum_{i=1}^{m} x i\right)+(m-1) \sum_{1 \leq i<j \leq m} f\left(n\left(x_{i}-x_{j}\right)\right)\\
		&- m(m-1) \sum_{i=1}^{m} f\left(n x_{i}\right)\bigg\}\\
		&\leq\frac{2}{3 m^{2}-m+4} \rho\left[f\left(\sum_{i=1}^{m} x_{i}\right)-\frac{1}{m} f\left((1+m n-n) \sum_{i=1}^{m} x_{i}\right)\right. \\
		& \left.-\frac{m-1}{m} f\left((1-n) \sum_{i=1}^{m} x_{i}\right)+(m-1) f\left(n \sum_{i=1}^{m} x_{i}\right)\right] \\
		& +\frac{2}{3 m^{2}-m+4} \sum_{1 \leq i<j \leq m} \rho\left[f\left(x_{i}-x_{j}\right)-\frac{1}{m} f((1+mn-n)(x_{i}-x_{j}))\right. \\
		& \left.-\frac{m-1}{m} f\left((1-n)\left(x_{i}-x_{j}\right)\right)+(m-1) f\left(n\left(x_{i}-x_{j}\right)\right)\right] \\
		& +\frac{2 m}{3 m^{2}-m+4} \sum_{i=1}^{m} \rho\left[f\left(x_{i}\right)-\frac{1}{m} f\left((1+m n-n) x_{i}\right)\right. \\
		& \left.-\frac{m-1}{m} f\left((1-n) x_{i}\right)+(m-1) f\left(n x_{i}\right)\right]\\
		&+\frac{k_{2}}{3 m^{3}-m^{2}+4m}\varphi\left((1+mn-n)x_{1},\cdots,(1+mn-n)x_{n}\right)\\
		&+\frac{k_{2}^{2}(m-1)}{6 m^{3}-2m^{2}+8m}\varphi\left((1-n)x_{1},\cdots,(1-n)x_{n}\right)
		+\frac{k_{2}^{2}k_{m-1}}{6 m^{2}-2m+8}\varphi\left(nx_{1},\cdots,nx_{n}\right)
	\end{align*}
	$$\longrightarrow 0 \text{ as }n \longrightarrow\infty$$
	for all $x_{1},\cdots,x_{m}\in E$. It means that the equation (\ref{eqnd}) is hyperstable on $E$.
\end{proof}

\begin{corollary}
	Let $\theta$ and $p$ be two real numbers such that $\theta\geq0$ and $p<0$. Let $f:E\to Y_{\rho}$ be a mapping satisfying 
	\begin{equation*}\label{eq1cor1}
		\rho\left(f\left(\sum_{i=1}^{m}x_{i}\right)+\sum_{1\leq i<j\leq m}f(x_{i}-x_{j})- m\sum_{i=1}^{m}f(x_{i})\right)\leq \theta \prod_{i=1}^{m}\|x_{i}\|^{p},
	\end{equation*}
	for all $x_{1},\cdots,x_{m}\in E\setminus\{0\}$. Then $f$ satisfies equation \eqref{eqnd} on $E$
\end{corollary}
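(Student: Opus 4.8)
The plan is to obtain this corollary as a direct specialization of Theorem~\ref{th1}: I choose the control function $\varphi$ so that the hypothesis of the corollary becomes exactly inequality \eqref{eq11th3}, and then I verify that the two limit conditions required by Theorem~\ref{th1} hold. Concretely, set
\[
\varphi(x_{1},x_{2},\dots,x_{m}) = \theta\prod_{i=1}^{m}\|x_{i}\|^{p}
\]
for $x_{1},\dots,x_{m}\in E\setminus\{0\}$. With this choice the assumed estimate on $f$ is precisely the bound \eqref{eq11th3}, so it only remains to check the two asymptotic hypotheses.

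For the first condition, fix $x\in E\setminus\{0\}$ and $n\in\mathbb{N}$; then
\[
\varphi(x,nx,\dots,nx) = \theta\,\|x\|^{p}\,\bigl(n\|x\|\bigr)^{(m-1)p} = \theta\, n^{(m-1)p}\,\|x\|^{mp}.
\]
Since $p<0$ and $m\geq 2$, the exponent $(m-1)p$ is strictly negative, so $n^{(m-1)p}\to 0$ and hence $\varphi(x,nx,\dots,nx)\to 0$ as $n\to\infty$, which is \eqref{cond1th3}. For the accompanying condition $\lim_{n\to\infty}\varphi(nx_{1},\dots,nx_{m})=0$, one computes for $x_{1},\dots,x_{m}\in E\setminus\{0\}$ that
\[
\varphi(nx_{1},\dots,nx_{m}) = \theta\, n^{mp}\prod_{i=1}^{m}\|x_{i}\|^{p}\longrightarrow 0 \quad\text{as }n\to\infty,
\]
because $mp<0$. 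With both limit hypotheses verified, Theorem~\ref{th1} applies and yields that $f$ satisfies \eqref{eqnd} on $E$.

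I expect no genuine obstacle: the substantive analytic work has already been carried out in Theorem~\ref{th1}, and this corollary is a routine verification of its hypotheses. The one point deserving a word of care is that the estimate in the corollary is imposed only on $E\setminus\{0\}$, while Theorem~\ref{th1} is stated with arguments ranging over all of $E$; this discrepancy is harmless because in the proof of Theorem~\ref{th1} the control function is evaluated only at the scaled points $(1+mn-n)z$, $(1-n)z$ and $nz$ with $z$ equal to $x$, some $x_{i}$, or some $x_{i}-x_{j}$, and for large $n$ these all have norm bounded away from $0$ whenever $z\neq 0$, so the values of $\varphi$ at the origin are never needed. Hence the argument goes through verbatim and the conclusion follows.
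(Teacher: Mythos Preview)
Your proposal is correct and follows essentially the same approach as the paper's own proof: both define $\varphi(x_{1},\dots,x_{m})=\theta\prod_{i=1}^{m}\|x_{i}\|^{p}$ and verify the two limit hypotheses of Theorem~\ref{th1} by direct computation. Your version is in fact slightly more careful, obtaining the correct factor $\|x\|^{mp}$ in the first limit (the paper writes $\|x\|^{p}$) and explicitly addressing the $E\setminus\{0\}$ versus $E$ issue, which the paper leaves unmentioned.
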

\begin{proof}
	In Theorem \ref{th1}, we suppose that
	$$
	\varphi(x_{1},\cdots,x_{m}):=\theta\prod_{i=1}^{m}\|x_{i}\|^{p}
	$$
	for all $x_{1},\cdots,x_{m}\in E$. We notice that
	$$
	\lim_{n\rightarrow\infty}\varphi\big(x,nx,\cdots,nx\big)=\lim_{n\rightarrow\infty}\theta n^{(m-1)p}\|x\|^{p}=0 
	$$
	and
	$$
	\lim_{n\rightarrow\infty}\varphi\big(nx,nx,\cdots,nx\big)=\lim_{n\rightarrow\infty}\theta n^{mp}\|x\|^{p}=0,
	$$
	for all $x\in E$.This implies that $f$ satisfies the equation \eqref{eqnd} on $E$.
\end{proof}

\section{Hyperstability of the $n$-dimensional quadratic equation in Banach space}
This last section investigates the hyperstability of the $n$-dimensional quadratic equation within the context of Banach spaces.
\begin{theorem}\label{th1-5}
	Suppose that $E$ is a non-empty subset of $X$, symmetric with respect to $0$, and satisfying $x+y,x-y\in E$ and $kx\in E$ for all $x,y\in E$ and all $k\in\mathbb{K}$. Let
	$\varphi:E^{m}\to [0,\infty)$ be a function such that:
	\begin{equation}\label{cond1th4}
		\lim_{n\rightarrow\infty}\varphi\big(x,nx,\cdots,nx\big)=0
	\end{equation}
	and
	\begin{equation*}\label{cond2th3}
		\lim_{n\rightarrow\infty}\varphi\big(nx_{1},nx_{2},\cdots,nx_{m}\big)=0.
	\end{equation*}
	Let $Y$ be a Banach space, and $f:E\longrightarrow Y$ be a mapping satisfying: 
	\begin{equation}\label{eq1th4}
		\left\|f\left(\sum_{i=1}^{m}x_{i}\right)+\sum_{1\leq i<j\leq m}f(x_{i}-x_{j})- m\sum_{i=1}^{m}f(x_{i})\right\|\leq \varphi(x_{1},x_{2},\cdots,x_{m}),
	\end{equation}
	for all $x_{1},x_{2},\cdots,x_{m}\in E$. Then $f$ satisfies the equation \eqref{eqnd} on $E$ .
\end{theorem}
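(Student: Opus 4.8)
The plan is to follow the same telescoping scheme as in the proof of Theorem~\ref{th1}, but the argument is considerably lighter: since $Y$ is a normed space the norm is absolutely homogeneous and subadditive, so none of the convex-modular weightings or $\Delta_2$-factors of Theorem~\ref{th1} are needed, and one works directly with $\|u_1+\dots+u_k\|\le\|u_1\|+\dots+\|u_k\|$ and $\|cu\|=|c|\,\|u\|$. It is convenient to write the defect as
\[
Df(x_1,\dots,x_m):=f\Big(\sum_{i=1}^{m}x_i\Big)+\sum_{1\le i<j\le m}f(x_i-x_j)-m\sum_{i=1}^{m}f(x_i),
\]
so that \eqref{eq1th4} reads $\|Df(x_1,\dots,x_m)\|\le\varphi(x_1,\dots,x_m)$ and the conclusion \eqref{eqnd} is exactly $Df\equiv0$ on $E^{m}$. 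The two facts that will drive the proof are that $f\mapsto Df$ is linear and that it is equivariant under dilations: writing $f_c(x):=f(cx)$ one has $Df_c(x_1,\dots,x_m)=Df(cx_1,\dots,cx_m)$ for every scalar $c$.

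First I would carry out the reduction step. Putting $x_1=x$, $x_2=\dots=x_m=nx$ in \eqref{eq1th4} and computing $\sum_i x_i=(1+(m-1)n)x$, $x_1-x_j=(1-n)x$ for $j\ge2$, and $x_i-x_j=0$ for $2\le i<j$, one obtains
\[
\Big\|f\big((1+(m-1)n)x\big)+(m-1)f\big((1-n)x\big)+\binom{m-1}{2}f(0)-mf(x)-m(m-1)f(nx)\Big\|\le\varphi(x,nx,\dots,nx).
\]
Evaluating this at $x=0$ and using $\varphi(0,\dots,0)=0$ (which follows from \eqref{cond1th4}) forces $f(0)=0$, because the coefficient of $f(0)$ in the bracket is a nonzero constant. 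Dropping that term and dividing by $m$ yields $\|f(x)-g_n(x)\|\le\tfrac1m\varphi(x,nx,\dots,nx)$, where
\[
g_n(x):=\tfrac1m f\big((1+(m-1)n)x\big)+\tfrac{m-1}{m}f\big((1-n)x\big)-(m-1)f(nx),
\]
so by \eqref{cond1th4} we get $g_n(x)\to f(x)$ in $Y$ for every $x\in E$.

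Next I would transfer the whole defect to infinity. Fixing $x_1,\dots,x_m\in E$ I would split
\[
\|Df(x_1,\dots,x_m)\|\le\|Df(x_1,\dots,x_m)-Dg_n(x_1,\dots,x_m)\|+\|Dg_n(x_1,\dots,x_m)\|.
\]
The first term is dominated by a fixed linear combination of the numbers $\|f(z)-g_n(z)\|$ over the finitely many arguments $z\in\{\sum_i x_i\}\cup\{x_i-x_j:i<j\}\cup\{x_i\}$, hence tends to $0$ by the reduction step. For the second term, linearity and dilation-equivariance of $D$ give
\begin{align*}
Dg_n(x_1,\dots,x_m)=\ &\tfrac1m\,Df\big((1+(m-1)n)x_1,\dots,(1+(m-1)n)x_m\big)\\
&+\tfrac{m-1}{m}\,Df\big((1-n)x_1,\dots,(1-n)x_m\big)-(m-1)\,Df\big(nx_1,\dots,nx_m\big),
\end{align*}
whence $\|Dg_n(x_1,\dots,x_m)\|\le\tfrac1m\varphi\big((1+(m-1)n)x_1,\dots\big)+\tfrac{m-1}{m}\varphi\big((1-n)x_1,\dots\big)+(m-1)\varphi\big(nx_1,\dots\big)$. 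The first and the third summand tend to $0$ directly from the second limit hypothesis (using $1+(m-1)n\to\infty$), and the middle one from that hypothesis applied with each $x_i$ replaced by $-x_i$ together with the index shift $n\mapsto n-1$. Letting $n\to\infty$ then gives $Df(x_1,\dots,x_m)=0$, that is, $f$ satisfies \eqref{eqnd} on $E$.

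The only genuinely delicate point is the bookkeeping of the second step: one must verify the dilation identity for $Dg_n$ term by term — in particular tracking the constant contributions coming from the differences $x_i-x_j$ that vanish identically — and check that the auxiliary dilation factors $1+(m-1)n$ and $1-n$ really do drive $\varphi$ to zero, which reduces to the two stated limits via a sign flip and a shift of the index. Everything else is the standard telescoping already carried out in Theorem~\ref{th1}, and a corollary for $\varphi(x_1,\dots,x_m)=\theta\prod_i\|x_i\|^{p}$ with $p<0$ then follows exactly as in the modular setting.
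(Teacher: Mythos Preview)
Your proposal is correct and follows essentially the same route as the paper's proof: both substitute $x_{2}=\cdots=x_{m}=nx$ to obtain the auxiliary sequence $g_{n}\to f$ pointwise, and then split the defect through $Dg_{n}$, bounding each dilated piece by a $\varphi$-value that tends to zero under the second hypothesis. Your write-up is in fact slightly more careful than the paper's, since you track the $\binom{m-1}{2}f(0)$ contribution coming from the coincident differences $x_{i}-x_{j}$ with $2\le i<j$ and dispose of it by first showing $f(0)=0$, whereas the paper silently omits that term.
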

\begin{proof}
	Let $x_{i}=nx$ with $i\geq 2$ and $n\in \mathbb{N}$ in \eqref{eq1th4}, we get
	\begin{multline*}\label{eq2th3}
		\bigg\|f\big((1+mn-n)x\big)+(m-1)f((1-n)x)-m(m-1)f(nx)-mf(x)\bigg\|  \\  \leq \varphi\big(x,nx,\cdots,nx\big),
	\end{multline*}
	for all $x\in E$, and all $n\in \mathbb{N}$. In view of  \eqref{cond1th4}, we deduce that
	\begin{equation*}\label{eq3th3}
		f(x)=\lim_{n\rightarrow\infty}\dfrac{1}{m}f\big((1+mn-n)x\big)+\dfrac{(m-1)}{m}f((1-n)x)-(m-1)f(nx),
	\end{equation*}
	for all $x\in X$. In other hand, we have 
	\begin{multline*}
		f\left(\sum_{i=1}^{m}x_{i}\right)=\lim_{n\rightarrow\infty}\Bigg\{\frac{1}{m}f\left((1+mn-n)\sum_{i=1}^{m}x_{i}\right)\\
		-\frac{(m-1)}{m}f\left((1-n)\sum_{i=1}^{m}x_{i}\right)-(m-1)f\left(n\sum_{i=1}^{m}x_{i}\right)\Bigg\},
	\end{multline*}
	\begin{multline*}
		m\sum_{i=1}^{m}f(x_{i})=\lim_{n\rightarrow\infty}\Bigg\{\sum_{i=1}^{m}f\big((1+mn-n)x_{i}\big)\\-(m-1)\sum_{i=1}^{m}f((1-n)x_{i})-m(m-1)\sum_{i=1}^{m}f\big(nx_{i}\big)\Bigg\},
	\end{multline*}
	and
	\begin{multline*}\label{eq6th3}
		\sum_{1\leq i<j\leq m}f\big(x_{i}-x_{j}\big)=\lim_{n\rightarrow\infty}\Bigg\{\frac{1}{m}\sum_{1\leq i<j\leq m}f\big((1+mn-n)(x_{i}-x_{j})\big)+\\
		\frac{(m-1)}{m}\sum_{1\leq i<j\leq m}f\big((1-n)(x_{i}-x_{j})\big) 
		-(m-1)\sum_{1\leq i<j\leq m}f\big(n(x_{i}-x_{j})\big)\Bigg\},
	\end{multline*}
	for all $x_{1},x_{2},\cdots,x_{m}\in E$. 
	Hence
	\begin{multline*}
		\bigg\|f\left(\sum_{i=1}^{m}x_{i}\right)+\sum_{1\leq i<j\leq m}f\big(x_{i}-x_{j}\big)-m\sum_{i=1}^{m}f(x_{i})\bigg\|
		\\
		=\lim_{n\longrightarrow \infty}\Bigg\|\frac{1}{m}f\left((1+mn-n)\sum_{i=1}^{m}x_{i}\right)
		-\frac{(m-1)}{m}f\left((1-n)\sum_{i=1}^{m}x_{i}\right)\\-(m-1)f\left(n\sum_{i=1}^{m}x_{i}\right)
		+\frac{1}{m}\sum_{1\leq i<j\leq m}f\big((1+mn-n)(x_{i}-x_{j})\big)\\
		+\frac{(m-1)}{m}\sum_{1\leq i<j\leq m}f\big((1-n)(x_{i}-x_{j})\big)
		-(m-1)\sum_{1\leq i<j\leq m}f\big(n(x_{i}-x_{j})\big)\\
		-\sum_{i=1}^{m}f\big((1+mn-n)x_{i}\big)
		+(m-1)\sum_{i=1}^{m}f((1-n)x_{i})+m(m-1)\sum_{i=1}^{m}f\big(nx_{i}\big)\Bigg\|\\
		\leq \lim_{n\longrightarrow\infty}\sup\left\{\frac{1}{m}\bigg\|f\left((1+mn-n)\sum_{i=1}^{m}x_{i}\right) \right.
		\left.
		 +\sum_{1\leq i<j\leq m}f\big((1+mn-n)(x_{i}-x_{j})\big)\right.\\\left.-m\sum_{i=1}^{m}f\big((1+mn-n)x_{i}\big)\bigg\|\right\}\\
		+\lim_{n\longrightarrow\infty}\sup\left\{\frac{(m-1)}{m}\bigg\|f\left((1-n)\sum_{i=1}^{m}x_{i}\right)+\sum_{1\leq i<j\leq m}f\big((1-n)(x_{i}-x_{j})\big)\right.\\\left.-m\sum_{i=1}^{m}f((1-n)x_{i})\bigg\|\right\}\\
		+\lim_{n\longrightarrow\infty}\sup\left\{(m-1)\bigg\|f\left(n\sum_{i=1}^{m}x_{i}\right)+\sum_{1\leq i<j\leq m}f\big(n(x_{i}-x_{j})\big)-m\sum_{i=1}^{m}f\big(nx_{i}\big)\bigg\|\right\}\\
		\leq \lim_{n\longrightarrow\infty}\sup \frac{1}{m}\varphi\bigg((1+mn-n)x_{1},\dots,(1+mn-n)x_{m}\bigg)\\
		+\lim_{n\longrightarrow\infty}\sup \frac{(m-1)}{m}\varphi\bigg((1-n)x_{1},\dots,(1-n)x_{m}\bigg)\\
		+\lim_{n\longrightarrow\infty}\sup (m-1)\varphi\bigg(nx_{1},\cdots,nx_{m}\bigg)
		=0,
	\end{multline*}
	for all $x_{1},\cdots,x_{m}\in E$. Which means that the equation \eqref{eqnd} is hyperstable on $E$.
\end{proof}
\begin{corollary}
	Let $\theta$ and $p$ two real numbers such that $\theta\geq0$ and $p<0$. Let $f:E\to Y$ be a mapping  satisfying 
	\begin{equation*}\label{eq1cor1}
		\left\|f\left(\sum_{i=1}^{m}x_{i}\right)+\sum_{1\leq i<j\leq m}f(x_{i}-x_{j})- m\sum_{i=1}^{m}f(x_{i})\right\|\leq \theta \prod_{i=1}^{m}\|x_{i}\|^{p},
	\end{equation*}
	for all $x_{1},\cdots,x_{m}\in X\setminus\{0\}$. Then $f$ satisfies the functional equation \eqref{eqnd} on $E$. 
\end{corollary}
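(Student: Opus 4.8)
The plan is to obtain this statement as a direct specialization of Theorem~\ref{th1-5}, taking the control function to be
\[
\varphi(x_{1},\dots,x_{m}) := \theta\prod_{i=1}^{m}\|x_{i}\|^{p}
\]
on nonzero arguments, where the negative power $p$ is meaningful. With this choice the hypothesis \eqref{eq1th4} of Theorem~\ref{th1-5} is precisely the inequality assumed in the corollary, so the entire proof reduces to verifying that this $\varphi$ obeys the two decay conditions required by that theorem, namely \eqref{cond1th4} together with the accompanying homogeneous limit.

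First I would check \eqref{cond1th4}. Inserting $m-1$ copies of $nx$ and one copy of $x$ gives
\[
\varphi(x,nx,\dots,nx)=\theta\,\|x\|^{p}\prod_{i=2}^{m}\|nx\|^{p}=\theta\,n^{(m-1)p}\|x\|^{mp}.
\]
Since $m\geq 2$ and $p<0$, the exponent $(m-1)p$ is strictly negative, so $n^{(m-1)p}\to 0$ and hence $\varphi(x,nx,\dots,nx)\to 0$ as $n\to\infty$ for every nonzero $x$. Next I would verify the homogeneous decay condition: here
\[
\varphi(nx_{1},\dots,nx_{m})=\theta\prod_{i=1}^{m}\|nx_{i}\|^{p}=\theta\,n^{mp}\prod_{i=1}^{m}\|x_{i}\|^{p},
\]
and because $mp<0$ the factor $n^{mp}\to 0$, forcing $\varphi(nx_{1},\dots,nx_{m})\to 0$ as $n\to\infty$ for all nonzero $x_{1},\dots,x_{m}$.

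With both decay conditions confirmed, Theorem~\ref{th1-5} applies verbatim and yields that $f$ satisfies \eqref{eqnd} on $E$. The proof is therefore entirely a matter of checking hypotheses, and there is no genuine obstacle: the single point deserving attention is the sign bookkeeping, namely that $p<0$ together with $m\geq 2$ makes both relevant exponents $(m-1)p$ and $mp$ strictly negative, which is exactly what drives the two limits to zero and lets the theorem be invoked.
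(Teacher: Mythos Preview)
Your proposal is correct and follows essentially the same approach as the paper: define $\varphi(x_{1},\dots,x_{m})=\theta\prod_{i=1}^{m}\|x_{i}\|^{p}$, verify the two decay conditions using $p<0$, and invoke Theorem~\ref{th1-5}. If anything, your version is slightly more careful, since you verify the general limit $\varphi(nx_{1},\dots,nx_{m})\to 0$ (as the theorem actually requires) and obtain the correct exponent $\|x\|^{mp}$ in the first limit.
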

\begin{proof}
	In Theorem \ref{th1-5}, we  suppose that
	$$
	\varphi(x_{1},\cdots,x_{m}):=\theta\prod_{i=1}^{m}\|x_{i}\|^{p}
	$$
	for all $x_{1},\cdots,x_{m}\in E$. We note that
	$$
	\lim_{n\rightarrow\infty}\varphi\big(x,nx,\cdots,nx\big)=\lim_{n\rightarrow\infty}\theta n^{(m-1)p}\|x\|^{p}=0
	$$
	and
	$$
	\lim_{n\rightarrow\infty}\varphi\big(nx,nx,\cdots,nx\big)=\lim_{n\rightarrow\infty}\theta n^{mp}\|x\|^{p}=0,
	$$
	for all $x\in E$, which means that $f$ satisfies the equation \eqref{eqnd} on $E$.
\end{proof}
	
	\section*{Acknowledgments}
	It is our great pleasure to thank the referee their his careful reading of the paper and for several helpful suggestions.
	
	\section*{Contributions}
	All authors contributed significantly to this paper and were involved in drafting and reviewing the manuscript. All authors have read and approved the final version of the manuscript.
	\section*{Ethics declarations}
	
	\subsection*{Availability of data and materials}
	Not applicable.
	\subsection*{Conflict of interest}
	The authors declare that they have no competing interests.
	\subsection*{Funding}
	Not applicable.

\end{document}